\DeclareSymbolFont{extraup}{U}{zavm}{m}{n}
\DeclareMathSymbol{\vardiamond}{\mathalpha}{extraup}{87}
\newcommand{\nomi}{\mathbf{i}}
\newcommand{\nomj}{\mathbf{j}}
\newcommand{\nomk}{\mathbf{k}}
\newcommand{\bigamp}{\mathop{\mbox{\Large \&}}}
\renewcommand{\phi}{\varphi}
\renewcommand{\emptyset}{\varnothing}
\newcommand{\Diamondblack}{\vardiamond}
\renewcommand{\epsilon}{\varepsilon}
\theoremstyle{definition}
\newtheorem{theorem}{Theorem}[section]
\newtheorem{lemma}[theorem]{Lemma}
\newtheorem{proposition}[theorem]{Proposition}
\newtheorem{example}[theorem]{Example}
\newtheorem{corollary}[theorem]{Corollary}
\newtheorem{definition}[theorem]{Definition}
\newtheorem{remark}[theorem]{Remark}
\title{Taming ``McKinsey-like'' formula: An Extended Correspondence and Completeness Theory for Hybrid Logic H(@)}
\author{Zhiguang Zhao}
\date{}
\begin{document}
\maketitle
\begin{abstract}
In the present article, we extend the fragment of inductive formulas for the hybrid language $\mathcal{L}(@)$ in \cite{ConRob} including a McKinsey-like formula, and show that every formula in the extended class has a first-order correspondent, by modifying the algorithm $\mathsf{hybrid}$-$\mathsf{ALBA}$ in \cite{ConRob}. We also identify a subclass of this extended inductive fragment, namely the extended skeletal formulas, which extend the class of skeletal formulas in \cite{ConRob}, each formula in which axiomatize a complete hybrid logic. Our proof method here is proof-theoretic, following \cite{GoVa01,Wa07} and \cite[Chapter 14]{BeBlWo06}, in contrast to the algebraic proof in \cite{ConRob}.

\emph{Keywords}: Hilbert system, hybrid logic, completeness theory, algorithmic correspondence theory
\end{abstract}

\section{Introduction}
\paragraph{Hybrid logic} Hybrid logics \cite[Chapter 14]{BeBlWo06} refer to the class of logics which has higher expressive power than modal logic where a special class of propositional symbols called \emph{nominals} are used to refer to single states. A nominal is true at exactly one world. Different logical connectives are used increase the expressive power, e.g.\ the \emph{satisfaction operator} $@_{\mathbf{i}}\phi$ which means $\phi$ is true at the state denoted by $\nomi$. We use $\mathcal{L}(@)$ to denote the hybrid language with nominals and satisfaction operators, and $\mathbf{K}_{\mathcal{H}(@)}$ to denote the basic Hilbert system.

\paragraph{Correspondence and completeness theory for hybrid logic} In modal logic, a modal formula corresponds to a first-order formula if they are valid on the same class of frames. Sahlqvist \cite{Sa75} and van Benthem \cite{vB83} identified a class of modal formulas (later called \emph{Sahlqvist formulas}) which have first-order correspondents and axiomatize strongly complete normal modal logics with respect to the class of frames defined by them. 

There are extensitve research on the correspondence and completeness theory for hybrid logic, see \cite{BetCMaVi04,Co09,
CoGoVa06b,ConRob,GaGo93,GoVa01,Ho06,HoPa10,Ta05,tCMaVi06,Wa07,Zh21c,Zh22b}. Gargov and Goranko \cite{GaGo93} proved that any extension of $\mathbf{K}_{\mathcal{H}}$ with pure axioms (formulas that contain no ordinary propositional variables but possibly contain nominals) is strongly complete. ten Cate and Blackburn \cite{tCBl06} proved that any pure extensions of $\mathbf{K}_{\mathcal{H}(@)}$ and $\mathbf{K}_{\mathcal{H}(@,\downarrow)}$ are strongly complete. ten Cate, Marx and Viana \cite{tCMaVi06} proved that any extensions of $\mathbf{K}_{\mathcal{H}(@)}$ with modal Sahlqvist formulas (with no nominals but possibly with propositional variables) are strongly complete, and that these two kinds of results cannot be combined in general, since there is a pure formula and a modal Sahlqvist formula which together axiomatize a Kripke-incomplete logic when added to $\mathbf{K}_{\mathcal{H}(@)}$. Conradie and Robinson \cite{ConRob} studied to what extent can these two results be combined in $\mathcal{L}(@)$, using algorithmic and algebraic method. Zhao \cite{Zh21c,Zh22b} studies the correspondence and completeness theory for $\mathcal{L}(@,\downarrow)$ following the algorithmic methodology of \cite{ConRob}.

\paragraph{Our contribution} In the present article, we extend the fragment of inductive formulas in \cite{ConRob} to \emph{extended inductive formulas} for $\mathcal{L}(@)$, which contains a Mckinsey-like formula, and show that every extended inductive formula has a first-order correspondent, by modifying the Ackermann Lemma Based Algorithm $\mathsf{hybrid}$-$\mathsf{ALBA}$ in \cite{ConRob} into another algorithm called $\mathsf{ALBA}^{@}$. We also identify a subclass of this extended inductive fragment, namely the \emph{extended skeletal formulas}, which extend the class of skeletal formulas in \cite{ConRob}, each formula in which axiomatize a complete hybrid logic. Our proof method here is proof-theoretic, following \cite{GoVa01,Wa07,Zh22b} and \cite[Section 3.1 in Chapter 14]{BeBlWo06}, in contrast to the algebraic proof in \cite{ConRob}. We define the class of extended skeletal formulas for $\mathcal{L}(@)$, show that for every such formula $\phi$ and its hybrid pure correspondence $\pi$, $\mathbf{K}_{\mathcal{H}(@)}+\phi$ proves $\pi$, therefore $\mathbf{K}_{\mathcal{H}(@)}+\phi$ is complete with respect to the class of frames defined by $\pi$, using a restricted version $\mathsf{ALBA}^{@}_{\mathsf{Restricted}}$ of the $\mathsf{ALBA}^{@}$.

\paragraph{Structure of the article} The structure of the article is as follows: Section \ref{dSec:Prelim} gives preliminaries on hybrid logic $\mathcal{L}(@)$, including its syntax, semantics and basic Hilbert system $\mathbf{K}_{\mathcal{H}(@)}$. Section \ref{dSec:Prelim:ALBA} lists ingredients on the algorithm and the completeness proof, and defines the expanded hybrid modal language $\mathcal{L}(@)^{+}$, the first-order correspondence language and the standard translation. Section \ref{dSec:Sahl} defines extended inductive formulas and extended skeletal formulas. Section \ref{dSec:ALBA} gives the algorithm $\mathsf{ALBA}^{@}$ and $\mathsf{ALBA}^{@}_{\mathsf{Restricted}}$ for $\mathcal{L}(@)$. Section \ref{dSec:Soundness} shows the soundness of the algorithm $\mathsf{ALBA}^{@}$. Section \ref{dSec:Success} sketches the proof that $\mathsf{ALBA}^{@}$ succeeds on extended inductive formulas and $\mathsf{ALBA}^{@}_{\mathsf{Restricted}}$ succeeds on extended skeletal formulas. Section \ref{dSec:Completeness} proves that $\mathbf{K}_{\mathcal{H}(@)}$ extended with extended skeletal formulas are strongly complete.

\section{Preliminaries on hybrid logic $\mathcal{L}(@)$}\label{dSec:Prelim}

In this section, we give preliminaries on the hybrid logic $\mathcal{L}(@)$ in the style of \cite{Zh21c}. For more details of hybrid logic, see \cite[Chapter 14]{BeBlWo06} and \cite{tC05}.

\subsection{Language and syntax}\label{dSubsec:Lan:Syn}

\begin{definition}
Given a countably infinite set $\mathsf{Prop}$ of propositional variables and a countably infinite set $\mathsf{Nom}$ of nominals which are disjoint, the hybrid language $\mathcal{L}(@)$ is defined as follows:
$$\phi::=p \mid \mathbf{i} \mid \bot \mid \top \mid \neg\phi \mid \phi\lor\phi \mid \phi\land\phi \mid \phi\to\phi \mid \Diamond\phi \mid \Box\phi \mid @_{\mathbf{i}}\phi,$$
where $p\in \mathsf{Prop}$, $\mathbf{i}\in\mathsf{Nom}$. We define $\phi\leftrightarrow\psi:=(\phi\to\psi)\land(\psi\to\phi)$. We say that a formula is \emph{pure} if it contains no propositional variables. We use $\sigma$ to denote a \emph{sorted substitution} that uniformly replaces propositional variables by formulas and nominals by nominals.

We will use \emph{inequalities} of the form $\phi\leq\psi$, where $\phi$ and $\psi$ are formulas, and \emph{quasi-inequalities} of the form $\phi_1\leq\psi_1\ \&\ \ldots\ \&\ \phi_n\leq\psi_n\ \Rightarrow\ \phi\leq\psi$. We will find it easy to work with inequalities $\phi\leq\psi$ in place of implicative formulas $\phi\to\psi$ in Section \ref{dSec:Sahl}.
\end{definition}

\subsection{Semantics}\label{dSubsec:Seman}

\begin{definition}
A \emph{frame} is a pair $\mathbb{F}=(W,R)$ where $W\neq\emptyset$ is the \emph{domain} of $\mathbb{F}$, $R\subseteq W\times W$ is the \emph{accessibility relation}. A \emph{model} is a pair $\mathbb{M}=(\mathbb{F},V)$ where $V:\mathsf{Prop}\cup\mathsf{Nom}\to P(W)$ is a \emph{valuation} on $\mathbb{F}$ such that $V(\nomi)\subseteq W$ is a singleton for all nominals $\nomi\in\mathsf{Nom}$.

The satisfaction relation is given as follows: for any model $\mathbb{M}=(W,R,V)$, any $w\in W$, 

\begin{center}
\begin{tabular}{l c l}
$\mathbb{M},w\Vdash p$ & iff & $w\in V(p)$;\\
$\mathbb{M},w\Vdash\nomi$ & iff & $\{w\}=V(\nomi)$;\\
$\mathbb{M},w\Vdash \bot$ & : & never;\\
$\mathbb{M},w\Vdash \top$ & : & always;\\
$\mathbb{M},w\Vdash \neg\phi$ & iff & $\mathbb{M},w\nVdash\phi$;\\
$\mathbb{M},w\Vdash\phi\lor\psi$ & iff & $\mathbb{M},w\Vdash \phi$ or $\mathbb{M},w\Vdash\psi$;\\
$\mathbb{M},w\Vdash\phi\land\psi$ & iff & $\mathbb{M},w\Vdash \phi$ and $\mathbb{M},w\Vdash\psi$;\\
$\mathbb{M},w\Vdash\phi\to\psi$ & iff & $\mathbb{M},w\nVdash \phi$ or $\mathbb{M},w\Vdash\psi$;\\
$\mathbb{M},w\Vdash\Diamond\phi$ & iff & $\exists v(Rwv\ \mbox{ and }\ \mathbb{M},v\Vdash\phi)$;\\
$\mathbb{M},w\Vdash \Box\phi$ & iff & $\forall v(Rwv\ \Rightarrow\ \mathbb{M},v\Vdash\phi)$;\\
$\mathbb{M},w\Vdash @_{\mathbf{i}}\phi$ & iff & $\mathbb{M},V(\nomi)\Vdash\phi$.
\label{dpage:downarrow}
\end{tabular}
\end{center}

For any formula $\phi$, 
\begin{itemize}
\item $V(\phi):=\{w\in W\mid \mathbb{M},w\Vdash\phi\}$ denotes the \emph{truth set} of $\phi$ in $\mathbb{M}$. 
\item $\phi$ is \emph{globally true} on $\mathbb{M}$ (notation: $\mathbb{M}\Vdash\phi$) if $\mathbb{M},w\Vdash\phi$ for every $w\in W$. 
\item $\phi$ is \emph{valid} on a frame $\mathbb{F}$ (notation: $\mathbb{F}\Vdash\phi$) if $\phi$ is globally true on $(\mathbb{F},V)$ for each valuation $V$.
\end{itemize}
\end{definition}

The semantics of inequalities and quasi-inequalities are given as follows:

\begin{itemize}
\item $\mathbb{M}\Vdash\phi\leq\psi\mbox{ iff }(\mbox{for all }w\in W, \mbox{ if }\mathbb{M},w\Vdash\phi, \mbox{ then }\mathbb{M},w\Vdash\psi).$
\item $\mathbb{M}\Vdash\phi_1\leq\psi_1\ \&\ \ldots\ \&\ \phi_n\leq\psi_n\ \Rightarrow\ \phi\leq\psi\mbox{ iff }$
$$(\mathbb{M}\Vdash\phi_i\leq\psi_i\mbox{ for all }1\leq i\leq n)\mbox{ implies }(\mathbb{M}\Vdash\phi\leq\psi).$$
\end{itemize}

Validities for inequalities and quasi-inequalities are defined similar to formulas. It is obvious that $\mathbb{M}\Vdash\phi\leq\psi$ iff $\mathbb{M}\Vdash\phi\to\psi$.

\subsection{Hilbert system}

The Hilbert system $\mathbf{K}_{\mathcal{H}(@)}$ of $\mathcal{L}(@)$ is given as follows (see \cite{tC05}):
\begin{itemize}
\item[(CT)] $\vdash\phi$ for all classical tautologies $\phi$
\item[(Dual)] $\vdash\Diamond p\leftrightarrow\neg\Box\neg p$
\item[(K)] $\vdash\Box(p\to q)\to(\Box p\to\Box q)$ 
\item[(K$_{@}$)] $\vdash @_{\nomi}(p\to q)\to(@_{\nomi} p\to@_{\nomi} q)$ 
\item[(Selfdual)] $\vdash\neg@_\nomi p\leftrightarrow@_{\nomi}\neg p$
\item[(Ref)] $\vdash@_{\nomi}\nomi$
\item[(Intro)] $\vdash\nomi\land p\to@_{\nomi}p$
\item[(Back)] $\vdash\Diamond@_{\nomi}p\rightarrow @_{\nomi}p$
\item[(Agree)] $\vdash@_\nomi@_\nomj p\to@_{\nomj}p$
\item[(MP)] If $\vdash\phi\to\psi$ and $\vdash\phi$ then $\vdash\psi$
\item[(SB)] If $\vdash\phi$ then $\vdash\sigma(\phi)$
\item[(Nec)] If $\vdash\phi$ then $\vdash\Box\phi$
\item[(Nec$_{@}$)] If $\vdash\phi$ then $\vdash @_{\nomi}\phi$
\item[(Name$_{@}$)] If $\vdash@_{\nomi}\phi$ then $\vdash\phi$, for $\nomi$ not occurring in $\phi$
\item[(BG$_{@}$)] If $\vdash@_{\nomi}\Diamond\nomj\land@_{\nomj}\phi\to\psi$, then $\vdash@_{\nomi}\Diamond\phi\to\psi$, for $\nomj\neq\nomi$ and $\nomj$ not occurring in $\phi$ and $\psi$
\end{itemize}

We use $\mathbf{K}_{\mathcal{H}(@)}+\Sigma$ to denote the system containing all axioms of $\mathbf{K}_{\mathcal{H}(@)}$ and $\Sigma$ and is closed under the rules of $\mathbf{K}_{\mathcal{H}(@)}$. $\vdash_{\Sigma}\phi$ means $\phi$ is a theorem of $\mathbf{K}_{\mathcal{H}(@)}+\Sigma$. When $\Sigma$ is empty, we use $\vdash\phi$ instead of $\vdash_{\emptyset}\phi$ .

$\Gamma\vdash_{\Sigma}\phi$ means that there are $\gamma_1,\ldots,\gamma_n\in\Gamma$ such that $\vdash_{\Sigma}\gamma_1\land\ldots\land\gamma_n\to\phi$. Given a frame class $\mathcal{F}$, we use  $\Gamma\Vdash_{\mathcal{F}}\phi$ to mean that for any frame $\mathbb{F}=(W,R)\in\mathcal{F}$, any valuation $V$ on $\mathbb{F}$, any point $w\in W$, if $\mathbb{F},V,w\Vdash\gamma$ for all $\gamma\in\Gamma$, then $\mathbb{F},V,w\Vdash\phi$.

The following is a theorem for any $\mathbf{K}_{\mathcal{H}(@)}+\Sigma$ where $@_{\nomj}\alpha$ has a single positive occurrence in $\theta$. This will be useful in Section \ref{dSec:Completeness}:\\

$\vdash_{\Sigma}@_{\nomi}\theta(@_{\nomj}\alpha)\leftrightarrow@_{\nomi}\theta(\bot)\lor(@_{\nomi}\theta(\top)\land@_{\nomj}\alpha).$\label{dpage:decomposition:Equivalence}
\begin{definition}[Soundness and Strong Completeness]
\begin{itemize}
\item $\mathbf{K}_{\mathcal{H}(@)}+\Sigma$ is said to be \emph{sound} with respect to $\mathcal{F}$, if $\Gamma\vdash_{\Sigma}\phi$ implies that $\Gamma\Vdash_{\mathcal{F}}\phi$. 
\item $\mathbf{K}_{\mathcal{H}(@)}+\Sigma$ is said to be \emph{strongly complete} with respect to $\mathcal{F}$, if $\Gamma\Vdash_{\mathcal{F}}\phi$ implies that $\Gamma\vdash_{\Sigma}\phi$.
\end{itemize}
\end{definition}

\begin{theorem}[Corollary in \cite{tC05}]\label{dCompleteness:Pure}
If $\Sigma$ is a set of pure $\mathcal{L}(@)$-formulas, then $\mathbf{K}_{\mathcal{H}(@)}+\Sigma$ is sound and strongly complete with respect to the class of frames defined by $\Sigma$.
\end{theorem}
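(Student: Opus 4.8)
The plan is to establish soundness by a routine induction on derivations, and to obtain strong completeness through a Henkin-style canonical model construction adapted to the hybrid setting, with purity of $\Sigma$ entering at exactly one decisive step. Soundness is immediate: each classical tautology and each basic axiom is valid on every frame, the rules (MP), (SB), (Nec), (Nec$_{@}$), (Name$_{@}$) and (BG$_{@}$) preserve validity (the last two relying on their freshness side-conditions), and each $\sigma\in\Sigma$ is valid on the class $\mathcal{F}$ it defines by the very definition of $\mathcal{F}$; induction on the length of a derivation then yields that $\Gamma\vdash_{\Sigma}\phi$ implies $\Gamma\Vdash_{\mathcal{F}}\phi$.

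For strong completeness I would argue the contrapositive: assuming $\Gamma\nvdash_{\Sigma}\phi$, I would build a model based on a frame in $\mathcal{F}$ refuting $\Gamma\Vdash_{\mathcal{F}}\phi$. First enrich $\mathcal{L}(@)$ with countably many fresh nominals. Choosing a fresh nominal $\nomk$, the set $\{\nomk\}\cup\{@_{\nomk}\psi\mid\psi\in\Gamma\cup\{\neg\phi\}\}$ is $\Sigma$-consistent, the key point being that an inconsistency here would, via (K$_{@}$), (Selfdual) and (Name$_{@}$), collapse to an inconsistency of $\Gamma\cup\{\neg\phi\}$ itself. I would then extend this set, by a Lindenbaum argument, to a maximal $\Sigma$-consistent set $\Delta$ that is moreover \emph{named} (it contains a nominal) and \emph{pasted} (whenever $@_{\nomi}\Diamond\psi\in\Delta$ there is a nominal $\nomj$ with $@_{\nomi}\Diamond\nomj\in\Delta$ and $@_{\nomj}\psi\in\Delta$), the pastedness being secured at each stage by (BG$_{@}$) using a fresh nominal. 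The canonical model $\mathbb{M}$ then has as worlds the equivalence classes of nominals under $\nomi\sim\nomj\iff @_{\nomi}\nomj\in\Delta$, accessibility $[\nomi]R[\nomj]\iff @_{\nomi}\Diamond\nomj\in\Delta$, and valuation $V(p)=\{[\nomi]\mid @_{\nomi}p\in\Delta\}$, $V(\nomi)=\{[\nomi]\}$; the axioms (Ref), (Agree), (Intro), (Back) and (Selfdual) guarantee these are well-defined. A Truth Lemma, $\mathbb{M},[\nomi]\Vdash\psi\iff @_{\nomi}\psi\in\Delta$, is proved by induction on $\psi$, the $\Diamond$-case invoking pastedness and the $\Box$-case its dual. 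Since $@_{\nomk}\gamma\in\Delta$ for every $\gamma\in\Gamma$ and $@_{\nomk}\neg\phi\in\Delta$, the root world $[\nomk]$ satisfies all of $\Gamma$ while refuting $\phi$.

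The main obstacle, and the only place where purity is used, is verifying that the underlying canonical frame $\mathbb{F}$ lies in $\mathcal{F}$, i.e. validates every $\sigma\in\Sigma$. Because $\sigma$ contains no propositional variables, its frame-validity reduces to truth under every assignment of worlds to the nominals occurring in it, the propositional component being irrelevant. Every world of $\mathbb{F}$ is named and the language carries infinitely many fresh nominals, so each such reassignment is realized by an actual sorted substitution $\sigma'$ of nominals into $\sigma$; since $\sigma$ is an axiom closed under (SB), each $\sigma'$ is a theorem, whence $@_{\cnomm}\sigma'\in\Delta$ for every nominal $\cnomm$, and the Truth Lemma converts this into truth of $\sigma'$ at every world. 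Ranging over all substitutions gives $\mathbb{F}\Vdash\sigma$. This transfer from derivability of instances to frame-validity is exactly what would break down for a formula containing ordinary propositional variables, because the canonical valuation fixes only the nominal assignments and cannot be freely varied over $\mathsf{Prop}$; thus purity is precisely the hypothesis that licenses the reduction. With $\mathbb{F}\in\mathcal{F}$ established, the root $[\nomk]$ supplies the desired refutation of $\Gamma\Vdash_{\mathcal{F}}\phi$, completing the contrapositive.
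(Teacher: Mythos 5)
The paper does not prove this theorem; it imports it as a cited corollary from ten Cate's work, so there is no in-paper argument to compare against. Your proof is correct and is essentially the standard argument from that cited literature: soundness by induction on derivations, and strong completeness via a named-and-pasted maximal consistent set, the induced canonical model over equivalence classes of nominals, a Truth Lemma, and the observation that purity plus the fact that every world is named lets derivability of all nominal-substitution instances of each $\sigma\in\Sigma$ transfer to frame validity on the canonical frame.
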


\section{Ingredients of algorithmic correspondence}\label{dSec:Prelim:ALBA}

In this article, we give a modified version $\mathsf{ALBA}^{@}$ of the correspondence algorithm $\mathsf{hybrid}$-$\mathsf{ALBA}$ defined in \cite{ConRob} for $\mathcal{L}(@)$. The algorithm $\mathsf{ALBA}^{@}$ transforms the input extended inductive hybrid formula $\phi\to\psi$ into an equivalent set of pure quasi-inequalities without propositional variables. This section follows the style of \cite{Zh21b}.

The ingredients of algorithmic correspondence can be listed as follows:
\begin{itemize}
\item An expanded hybrid modal language $\mathcal{L}(@)^{+}$, its semantics and its standard translation into first-order correspondence language;
\item An algorithm $\mathsf{ALBA}^{@}$ which transforms a given extended inductive hybrid formula $\phi\to\psi$ into equivalent pure quasi-inequalities $\mathsf{Pure}(\phi\to\psi)$ as well as the first-order correspondent $\mathsf{FO}(\phi\to\psi)$;
\item The class of \emph{extended inductive formulas}, which is strictly larger than inductive formulas defined in \cite{ConRob}, on which $\mathsf{ALBA}^@$ is successful;
\item A soundness proof of the algorithm.
\end{itemize}

The ingredients of the completeness proof can be listed as follows:
\begin{itemize}
\item A translation of the inequalities and quasi-inequalities involved in a restricted version $\mathsf{ALBA}^{@}_{\mathsf{Restricted}}$ of the algorithm $\mathsf{ALBA}^{@}$ into $\mathcal{L}(@)$-formulas;
\item A proof that for any extended skeletal formula $\phi\to\psi$, for each step of the execution of $\mathsf{ALBA}^{@}_{\mathsf{Restricted}}$, the translations of the resulting quasi-inequalities are provable in $\mathbf{K}_{\mathcal{H}(@)}+(\phi\to\psi)$, therefore $\pi$ is provable in $\mathbf{K}_{\mathcal{H}(@)}+(\phi\to\psi)$.
\end{itemize}

In the algorithmic correspondence part, we define an expanded hybrid modal language $\mathcal{L}(@)^{+}$ which the $\mathsf{ALBA}^{@}$ will manipulate (Section \ref{dSub:expanded:language}), the first-order correspondence language and the standard translation (Section \ref{dSub:FOL:ST}). We define the extended inductive/skeletal formulas (Section \ref{dSec:Sahl}), define the algorithms $\mathsf{ALBA}^{@}$ and $\mathsf{ALBA}^{@}_{\mathsf{Restricted}}$ (Section \ref{dSec:ALBA}), show their soundness (Section \ref{dSec:Soundness}) and success on extended inductive/skeletal formulas (Section \ref{dSec:Success}).

In the completeness proof part, we give a translation of the inequalities and quasi-inequalities involved in $\mathsf{ALBA}^{@}_{\mathsf{Restricted}}$ into $\mathcal{L}(@)$ and prove that for any extended skeletal formula $\phi\to\psi$, for each step of the execution of $\mathsf{ALBA}^{@}_{\mathsf{Restricted}}$, the translations of the resulting quasi-inequalities are provable in $\mathbf{K}_{\mathcal{H}(@)}+(\phi\to\psi)$ (Section \ref{dSec:Completeness}).

\subsection{The expanded hybrid modal language $\mathcal{L}(@)^{+}$}\label{dSub:expanded:language}

We define the expanded hybrid modal language $\mathcal{L}(@)^{+}$ used in $\mathsf{ALBA}^{@}$:
$$\phi::=p \mid \nomi \mid \bot \mid \top \mid \neg\phi \mid \phi\land\phi \mid \phi\lor\phi \mid \phi\to\phi \mid \Box\phi \mid \Diamond\phi \mid @_{\nomi}\phi \mid \blacksquare\phi \mid \Diamondblack\phi$$

For $\blacksquare$ and $\Diamondblack$, they are interpreted as the box and diamond modality on the inverse relation $R^{-1}$. For the semantics, the additional connectives are interpreted as follows:
\begin{center}
\begin{tabular}{l c l}
$\mathbb{M},w\Vdash \blacksquare\phi$ & iff & $\forall v(Rvw\ \Rightarrow\ \mathbb{M},v\Vdash\phi)$\\
$\mathbb{M},w\Vdash \Diamondblack\phi$ & iff & $\exists v(Rvw\ \mbox{ and }\ \mathbb{M},v\Vdash\phi)$.\\
\end{tabular}
\end{center}

\subsection{The first-order correspondence language and the standard translation}\label{dSub:FOL:ST}

In the first-order correspondence language, we have a binary predicate symbol $R$ corresponding to the accessibility relation, unary predicate symbols $P$ corresponding to each propositional variable $p$, constant symbols $i$ corresponding to each nominal $\nomi$.

\begin{definition}
The standard translation of $\mathcal{L}(@)^{+}$ is as follows:
\begin{itemize}
\item $ST_{x}(p):=Px$;
\item $ST_{x}(\nomi):=x=i$;
\item $ST_{x}(\bot):=x\neq x$;
\item $ST_{x}(\top):=x=x$;
\item $ST_{x}(\neg\phi):=\neg ST_{x}(\phi)$;
\item $ST_{x}(\phi\land\psi):=ST_{x}(\phi)\land ST_{x}(\psi)$;
\item $ST_{x}(\phi\lor\psi):=ST_{x}(\phi)\lor ST_{x}(\psi)$;
\item $ST_{x}(\phi\to\psi):=ST_{x}(\phi)\to ST_{x}(\psi)$;
\item $ST_{x}(\Box\phi):=\forall y(Rxy\to ST_{y}(\phi))$;
\item $ST_{x}(\Diamond\phi):=\exists y(Rxy\land ST_{y}(\phi))$;
\item $ST_{x}(@_{\nomi}\phi):=ST_{i}(\phi)$;
\item $ST_{x}(\blacksquare\phi):=\forall y(Ryx\to ST_{y}(\phi))$;
\item $ST_{x}(\Diamondblack\phi):=\exists y(Ryx\land ST_{y}(\phi))$.
\end{itemize}
\end{definition}

It is obvious that the translation is correct:
\begin{proposition}
For any model $\mathbb{M}$, any $w\in W$ and any $\mathcal{L}(@)^{+}$-formula $\phi$, 
$$\mathbb{M},w\Vdash\phi\mbox{ iff }\mathbb{M}\vDash ST_{x}(\phi)[w].$$
\end{proposition}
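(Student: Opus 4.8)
The plan is to prove this by straightforward structural induction on the $\mathcal{L}(@)^{+}$-formula $\phi$, mirroring the recursive definition of $ST_x$ against the satisfaction clauses. The underlying correspondence between the model $\mathbb{M}$ and the first-order structure it induces is implicit: each unary predicate $P$ is interpreted as $V(p)$, the binary $R$ as the accessibility relation, and each constant $i$ as the unique point in the singleton $V(\nomi)$. I would make this dictionary explicit at the outset, since the base cases for atoms rely on it directly.

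First I would dispatch the base cases. For $\phi = p$ we have $\mathbb{M},w\Vdash p$ iff $w\in V(p)$ iff $\mathbb{M}\vDash Px[w]$, and similarly $\bot,\top$ match $x\neq x$ and $x=x$ trivially. The nominal case $\phi=\nomi$ is where the dictionary is used: $\mathbb{M},w\Vdash\nomi$ iff $\{w\}=V(\nomi)$ iff $w$ equals the interpretation of the constant $i$, i.e.\ $\mathbb{M}\vDash(x=i)[w]$. Next, the Boolean connectives $\neg,\land,\lor,\to$ follow immediately from the induction hypothesis, because both $ST_x$ and the satisfaction relation commute with the Boolean structure. For the four modalities I would again apply the induction hypothesis to the immediate subformula, taking care only that $\Box,\Diamond$ quantify along $Rxy$ while the backward-looking $\blacksquare,\Diamondblack$ quantify along $Ryx$, which is exactly the asymmetry recorded in their respective clauses.

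The one case requiring a little more care is the satisfaction operator $@_{\nomi}$, which I expect to be the main (and only) genuine obstacle. Here $ST_x(@_{\nomi}\phi):=ST_i(\phi)$, so the free evaluation variable is discharged and replaced by the constant $i$, and on the semantic side $\mathbb{M},w\Vdash @_{\nomi}\phi$ iff $\mathbb{M},V(\nomi)\Vdash\phi$, evaluated at the unique world named by $\nomi$. To close this case I would observe that $ST_i(\phi)$ is precisely $ST_y(\phi)$ with the free variable instantiated to the constant $i$, and invoke the standard first-order substitution property: $\mathbb{M}\vDash ST_i(\phi)$ iff $\mathbb{M}\vDash ST_y(\phi)[v]$ where $v$ is the denotation of $i$. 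Since $v$ is exactly the point in $V(\nomi)$, the induction hypothesis applied to $\phi$ at $v$ yields the equivalence. This completes the induction, and the proposition follows.
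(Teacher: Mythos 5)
Your proof is correct: the paper offers no argument at all for this proposition (it is stated as ``obvious''), and your structural induction, with the explicit model-to-structure dictionary and the careful treatment of the $@_{\nomi}$ case via substitution of the constant $i$ for the free variable, is exactly the standard argument that justifies the claim. Nothing is missing.
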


For inequalities and quasi-inequalities, the standard translations are given in a global way:
\begin{definition}
\begin{itemize}
\item $ST(\phi\leq\psi):=\forall x(ST_{x}(\phi)\to ST_{x}(\psi))$;
\item $ST(\phi_1\leq\psi_1\ \&\ \ldots\ \&\ \phi_n\leq\psi_n\ \Rightarrow\ \phi\leq\psi):= ST(\phi_1\leq\psi_1)\land\ldots\land ST(\phi_n\leq\psi_n)\to ST(\phi\leq\psi)$.
\end{itemize}
\end{definition}

\begin{proposition}\label{dProp:ST:ineq:quasi:mega}
For any model $\mathbb{M}$, any inequality $\mathsf{Ineq}$, any quasi-inequality $\mathsf{Quasi}$,
$$\mathbb{M}\Vdash\mathsf{Ineq}\mbox{ iff }\mathbb{M}\vDash ST(\mathsf{Ineq});$$
$$\mathbb{M}\Vdash\mathsf{Quasi}\mbox{ iff }\mathbb{M}\vDash ST(\mathsf{Quasi}).$$
\end{proposition}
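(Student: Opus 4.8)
The plan is to reduce both equivalences to the already-established pointwise translation result (the preceding Proposition stating that $\mathbb{M},w\Vdash\phi$ iff $\mathbb{M}\vDash ST_{x}(\phi)[w]$), by unfolding the definitions of the semantics of (quasi-)inequalities and of the standard translation $ST$. No genuinely hard step is involved; the entire content is a matter of matching the global universal quantifier in $ST$ against the ``for all $w\in W$'' appearing in the semantic clauses.

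First I would treat the inequality case. Unfolding the definition of validity, $\mathbb{M}\Vdash\phi\leq\psi$ holds precisely when, for every $w\in W$, $\mathbb{M},w\Vdash\phi$ implies $\mathbb{M},w\Vdash\psi$. On the other side, unfolding $ST(\phi\leq\psi)=\forall x(ST_{x}(\phi)\to ST_{x}(\psi))$, we obtain that $\mathbb{M}\vDash ST(\phi\leq\psi)$ iff for every $w\in W$, $\mathbb{M}\vDash ST_{x}(\phi)[w]$ implies $\mathbb{M}\vDash ST_{x}(\psi)[w]$. Applying the pointwise proposition to both $\phi$ and $\psi$ inside the scope of the quantifier converts each satisfaction statement $\mathbb{M}\vDash ST_{x}(\cdot)[w]$ into the corresponding forcing statement $\mathbb{M},w\Vdash(\cdot)$, and the two unfolded conditions then coincide verbatim. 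This yields $\mathbb{M}\Vdash\phi\leq\psi$ iff $\mathbb{M}\vDash ST(\phi\leq\psi)$.

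Next I would handle the quasi-inequality case by bootstrapping from the inequality case. By definition, $\mathbb{M}\Vdash(\phi_1\leq\psi_1\ \&\ \ldots\ \&\ \phi_n\leq\psi_n\ \Rightarrow\ \phi\leq\psi)$ means that ($\mathbb{M}\Vdash\phi_i\leq\psi_i$ for all $1\leq i\leq n$) implies $\mathbb{M}\Vdash\phi\leq\psi$, whereas $\mathbb{M}\vDash ST(\mathsf{Quasi})$ means that ($\mathbb{M}\vDash ST(\phi_i\leq\psi_i)$ for all $i$) implies $\mathbb{M}\vDash ST(\phi\leq\psi)$. Replacing each hypothesis conjunct and the conclusion via the inequality equivalence established in the previous paragraph turns the second statement into the first, so the two are equivalent.

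The only point requiring a modicum of care—and the closest thing to an obstacle—is the interplay between the \emph{global} notion of satisfaction used for (quasi-)inequalities and the \emph{local}, pointwise nature of the formula-level translation. Once one observes that the outer $\forall x$ in $ST(\phi\leq\psi)$ is exactly what encodes the universally quantified ``for all $w\in W$'' of the semantic definition, both directions of each biconditional are immediate, and the proposition follows.
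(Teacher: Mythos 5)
Your proof is correct and is exactly the routine unfolding the paper has in mind: the paper states this proposition without proof, treating it as an immediate consequence of the pointwise translation correctness, which is precisely the reduction you carry out. Nothing further is needed.
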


\section{Extended inductive/skeletal formulas}\label{dSec:Sahl}

In the present section, we use the unified correspondence style definition (cf.\ \cite{CoPa12,CPZ:Trans,PaSoZh16,Zh21b}) to define extended inductive/skeletal formulas. We will find it convenient to use inequalities $\phi\leq\psi$ instead of implicative formulas $\phi\to\psi$. For formulas $\theta$ not of the form $\phi\to\psi$, we treat it as $\top\to\theta$.

\begin{definition}[Order-type](cf.\ \cite[page 346]{CoPa12})
For any $n$-tuple $(p_1, \ldots, p_n)$ of propositional variables, an \emph{order-type} $\epsilon$ of this $n$-tuple is an element in $\{1,\partial\}^{n}$. With respect to an order-type $\epsilon$, we say that $p_i$ has order-type 1 (resp.\ $\partial$) if $\epsilon_i=1$ (resp.\ $\epsilon_i=\partial$), and write $\epsilon(p_i)=1$ (resp.\ $\epsilon(p_i)=\partial$). We use $\epsilon^{\partial}$ to denote the opposite order-type of $\epsilon$ where $\epsilon^{\partial}(p_i)=1$ if $\epsilon(p_i)=\partial$ and $\epsilon^{\partial}(p_i)=\partial$ if $\epsilon(p_i)=1$.
\end{definition}

\begin{definition}[Signed generation tree]\label{dadef: signed gen tree}(cf.\ \cite[Definition 4]{CPZ:Trans})
The \emph{positive}/\emph{negative generation tree} of an $\mathcal{L}(@)^{+}$-formula $\phi$ is defined as follows: we first label the root of the syntactic generation tree of $\phi$ with $+$ (resp.\ $-$), then label the children nodes as below:
\begin{itemize}
\item Label the same sign to the children nodes of a mother node labelled with $\lor, \land, \Box$, $\Diamond$, $\blacksquare$, $\Diamondblack$;
\item Label the opposite sign to the child node of a mother node labelled with $\neg$;
\item Label the opposite sign to the first child node and the same sign to the second child node of a mother node labelled with $\to$;
\item Label the same sign to the second child node of a mother node labelled with $@$ (notice that we do not label the first child node, i.e.\ the nominal node).
\end{itemize}
A node in a signed generation tree is called \emph{positive} (resp.\ \emph{negative}) if it is signed $+$ (resp.\ $-$).
\end{definition}

\begin{example}
The positive generation tree of $+\Box (p \lor \neg\Diamond q)\to\Box q$ is given in Figure \ref{dfig:example}.
\begin{figure}[htb]
\centering
\begin{tikzpicture}
\tikzstyle{level 1}=[level distance=0.8cm, sibling distance=1.5cm]
\tikzstyle{level 2}=[level distance=0.8cm, sibling distance=1.5cm]
\tikzstyle{level 3}=[level distance=0.8cm, sibling distance=1.5cm]
 \node {$+\to$}         
              child{node{$-\Box$}
                     child{node{$-\lor$}
                           child{node{$-p$}}
                            child{node{$-\neg$}
                             child{node{$+\Diamond$}
                                child{node{$+q$}}}}}}
              child{node{$+\Box$}
                        child{node{$+q$}}} 
;
\end{tikzpicture}
\caption{Positive generation tree for $\Box (p \lor \neg\Diamond q)\to\Box q$}
\label{dfig:example}
\end{figure}
\end{example}

We use $+\phi$ and $-\psi$ in $\phi\leq\psi$ when defining extended inductive/skeletal inequalities. For any $\ast\in\{+,-\}$, $\ast\phi$ is \emph{uniform} in $p_i$ if all occurrences of $p_i$ in $\ast\phi$ have the same sign, and $\ast\phi$ is $\epsilon$-\emph{uniform} in an array $\vec{p}$ (notation: $\epsilon(\ast\phi)$) if $\ast\phi$ is uniform in each $p_i$ in $\vec{p}$, where $p_i$ always has the sign $+$ (resp.\ $-$) if $\epsilon(p_i)=1$ (resp.\ $\partial$).

For any $\epsilon$ over $(p_1,\ldots p_n)$, any $\phi$, any $i=1,\ldots,n$, any $\ast\in\{+,-\}$, an \emph{$\epsilon$-critical node} in $\ast\phi$ is a leaf node $+p_i$ when $\epsilon(p_i)=1$ or $-p_i$ when $\epsilon(p_i)=\partial$. An $\epsilon$-{\em critical branch} in $\ast\phi$ is a branch from the root to an $\epsilon$-critical node. The $\epsilon$-critical branches are those which $\mathsf{ALBA}^{@}$ and $\mathsf{ALBA}^{@}_{\mathsf{Restricted}}$ will solve for.

We use $+\psi\prec\ast\phi$ (resp.\ $-\psi\prec\ast\phi$) to indicate that an occurrence of a subformula $\psi$ inherits the positive (resp.\ negative) sign from $\ast\phi$. We use $\epsilon(\gamma) \prec \ast \phi$ (resp.\ $\epsilon^\partial(\gamma) \prec \ast \phi$) to denote that the signed generation subtree $\gamma$, with the sign inherited from $\ast \phi$, is $\epsilon$-uniform (resp.\ $\epsilon^\partial$-uniform). $p$ is \emph{positive} (resp.\ \emph{negative}) in $\phi$ if $+p\prec+\phi$ (resp.\ $-p\prec+\phi$) for all occurrences of $p$ in $\phi$.

\begin{definition}\label{dadef:good:branches}(cf.\ \cite[Definition 5]{CPZ:Trans})
Nodes in signed generation trees are called \emph{outer nodes} and \emph{inner nodes}, according to Table \ref{daJoin:and:Meet:Friendly:Table}. For the names of outer nodes and inner nodes in the classification, see Remark \ref{Remark:Classification}. For the name SRA and SRR, they stands for ``syntactic right adjoint'' and ``syntactic right residual'', respectively. This is based on the algebraic properties of the interpretations of the connectives. For more details, see \cite[Remark 3.24]{PaSoZh16}.

For any $\ast\in\{+,-\}$, a branch in a $\ast\phi$ is called a \emph{extended good branch} if it is the concatenation of three paths $P_1,P_2,P_3$, one of which might be of length $0$, such that $P_1$ is a path from the leaf consisting (apart from variable nodes) of inner nodes only, and $P_2$ consists (apart from variable nodes) of outer nodes only, and in $P_3$, the node (apart from variable nodes) farthest to the root is $@$. A branch is called a \emph{extended skeletal branch} if it is an extended good branch and the path $P_1$ is of length 0. A branch is called a \emph{good branch} if it is an extended good branch and the path $P_3$ is of length 0. A branch is called a \emph{skeletal branch} if it is an extended good branch and the path $P_1,P_3$ are of length 0.
\begin{table}
\begin{center}
\begin{tabular}{| c | c |}
\hline
Outer &Inner\\
\hline
 & SRA \\
&
\begin{tabular}{c c c c c}
$+$ &$\wedge$ &$\Box$ & $\neg$ & $@$\\
$-$ &$\vee$ &$\Diamond$ & $\neg$ & $@$\\
\end{tabular}\\
\hline
&SRR\\
\begin{tabular}{c c c c c c c}
$+$ & $\vee$ & $\wedge$ &$\Diamond$ & $\neg$  & $@$ &\\
$-$ & $\wedge$ & $\vee$ &$\Box$ & $\neg$  & $@$ & $\to$\\
\end{tabular}
&\begin{tabular}{c c c c}
$+$ &$\vee$ &$\to$\\
$-$ & $\wedge$ &\\
\end{tabular}
\\
\hline
\end{tabular}
\end{center}
\caption{Outer and Inner nodes.}\label{daJoin:and:Meet:Friendly:Table}
\vspace{-1em}
\end{table}
\end{definition}

\begin{definition}[Extended Inductive/Extended Skeletal/Inductive/Skeletal Inequalities]\label{daInducive:Ineq:Def}(cf.\ \cite[Definition 6]{CPZ:Trans})
For any order-type $\epsilon$ and any strict partial order $<_\Omega$ on $p_1,\ldots p_n$ (the \emph{dependence order}), the signed generation tree $*\phi$ $(*\in\{-,+\})$ of a formula $\phi(p_1,\ldots p_n)$ is \emph{$(\Omega,\epsilon)$-extended inductive/extended skeletal/inductive/skeletal} if

\begin{enumerate}
\item for any $i=1,\ldots,n$, every $\epsilon$-critical branch with leaf $p_i$ is extended good/extended skeletal/good/skeletal;
\item every SRR-node in an $\epsilon$-critical branch is either $\bigstar(\gamma,\beta)$ or $\bigstar(\beta,\gamma)$, where $\bigstar$ is a binary connective, the $\epsilon$-critical branch goes through $\beta$, and
\begin{enumerate}
\item $\epsilon^\partial(\gamma) \prec \ast \phi$;
\item $p_k <_{\Omega} p_i$ for every $p_k$ occurring in $\gamma$.
\end{enumerate}
\end{enumerate}

An inequality $\phi\leq\psi$ is \emph{$(\Omega,\epsilon)$-extended inductive/extended skeletal/inductive/skeletal} if the signed generation trees $+\phi$ and $-\psi$ are so. An inequality $\phi\leq\psi$ is \emph{extended inductive/extended skeletal/inductive/skeletal} if it is so for some ($\Omega$, $\epsilon$). A formula $\phi\to\psi$ (resp.\ a formula $\theta$ which is not implicative) is \emph{extended inductive/extended skeletal/inductive/skeletal} if $\phi\leq\psi$ (resp.\ $\top\leq\theta$) is so.
\end{definition}

From the definition, it is easy to see that the classes of extended inductive/skeletal formulas are respectively strictly larger than the classes of inductive/skeletal formulas. For the examples distinguish the extended classes and the non-extended classes, we give the following examples:
\begin{example}
For the formula $\Box @_{\nomi}\Diamond\Box p\to\Diamond\Box p$, it is $(\Omega,\epsilon)$-extended inductive where $\epsilon(p)=1$ and $<_{\Omega}$ is empty. Here The critical branch is $+\Box@_{\nomi}\Diamond\Box p$, where the $P_3$-part is $+\Box,+@_{\nomi}$, the $P_2$-part is $+\Diamond$, the $P_1$-part is $+\Box$. It is obvious that it is not an inductive formula.
\end{example}

\begin{example}\label{Example:McKinsey}
For the McKinsey-like formula $\Box @_{\nomi}\Diamond p\to\Diamond\Box p$
, it is $(\Omega,\epsilon)$-extended inductive where $\epsilon(p)=1$ and $<_{\Omega}$ is empty. Here The critical branch is $+\Box@_{\nomi}\Diamond p$, where the $P_3$-part is $+\Box,+@_{\nomi}$, the $P_2$-part is $+\Diamond$, the $P_1$-part is empty. It is obvious that it is not a skeletal formula.
\end{example}

\begin{remark}\label{Remark:Classification}
The classification of outer nodes and inner nodes is based on how different connectives behave in the algorithm. When the input formula is an extended inductive formula, the algorithm first decompose the $P_3$-part, then the outer part of the formula, and then the inner part of the formula, as we will see in the algorithm.

The $P_3$-part is the special point of the definition of extended inductive formulas. The basic idea is that when computing the minimal valuation, the $@$-operator can ``reset'' the minimal valuation, so no matter what connectives occur in the $P_3$-part, the last connective $@$ can make the minimal valuation a nominal again, so the ``outer-inner structure'' is not necessary in the $P_3$-part. From the example of the algorithm, this will be more clear.
\end{remark}

\section{The algorithm $\mathsf{ALBA}^{@}$ and $\mathsf{ALBA}^{@}_{\mathsf{Restricted}}$}\label{dSec:ALBA}

In this section, we define the modified version of the correspondence algorithms $\mathsf{ALBA}^{@}$ and $\mathsf{ALBA}^{@}_{\mathsf{Restricted}}$ for $\mathcal{L}(@)$, following the style of \cite{ConRob}. The major difference between the two algorithms is that the latter algorithm does not treat the $P_1$-part of extended good branches, therefore it could only treat extended skeletal branches and extended skeletal formulas. We define $\mathsf{ALBA}^{@}$, and $\mathsf{ALBA}^{@}_{\mathsf{Restricted}}$ is the algorithm without the Stage 2.3.

The input of $\mathsf{ALBA}^{@}$ is a formula $\phi\to\psi$ (when the input formula $\theta$ is not implicative, we first rewrite it into $\top\to\theta$). $\mathsf{ALBA}^{@}$ transforms it into an inequality $\phi\leq\psi$. Then $\mathsf{ALBA}^{@}$ goes in three steps.

\begin{enumerate}
\item \textbf{First approximation}:
We apply the following \emph{first approximation rule} to $\phi\leq\psi$:
$$\infer{\nomi_0\leq\phi\ \&\ \psi\leq \neg\nomi_1\ \Rightarrow \nomi_0\leq\neg\nomi_1}{\phi\leq\psi}
$$
We call the quasi-inequality $\nomi_0\leq\phi\ \&\ \psi\leq \neg\nomi_1\ \Rightarrow \nomi_0\leq\neg\nomi_1$ a \emph{system}.
\item \textbf{The reduction stage}:
In this stage, for each system $\mathsf{S}\ \Rightarrow\ \nomi_0\leq\neg\nomi_1$ obtained during this stage, we apply the following rules to prepare for eliminating all the proposition variables in $\mathsf{S}$:

\begin{itemize}
\item \textbf{Stage 2.1: Decomposing the $P_3$-part}
\begin{enumerate}
\item Suppose that we have an inquality $\nomi\leq\theta(@_{\nomj}\alpha)$, where $+@_{\nomj}\alpha\prec +\theta$ is a farthest $@$ node in the $P_3$-part, then we have the following decomposition rule:
$$\infer{(\nomi\leq\theta(\bot)\ \&\ \mathsf{S}\ \Rightarrow\ \nomi_0\leq\neg\nomi_1)\ (\nomi\leq\theta(\top)\ \&\ \nomj\leq\alpha\ \&\ \mathsf{S}\ \Rightarrow\ \nomi_0\leq\neg\nomi_1)}{\nomi\leq\theta(@_{\nomj}\alpha)\ \&\ \mathsf{S}\ \Rightarrow\ \nomi_0\leq\neg\nomi_1}
$$
\item Suppose that we have an inquality $\nomi\leq\theta(@_{\nomj}\alpha)$, where $-@_{\nomj}\alpha\prec +\theta$ is a farthest $@$ node in the $P_3$-part, then we have the following decomposition rule:
$$\infer{(\nomi\leq\theta(\top)\ \&\ \mathsf{S}\ \Rightarrow\ \nomi_0\leq\neg\nomi_1)\ (\nomi\leq\theta(\bot)\ \&\ \alpha\leq\neg\nomj\ \&\ \mathsf{S}\ \Rightarrow\ \nomi_0\leq\neg\nomi_1)}{\nomi\leq\theta(@_{\nomj}\alpha)\ \&\ \mathsf{S}\ \Rightarrow\ \nomi_0\leq\neg\nomi_1}
$$
\item Suppose that we have an inquality $\theta(@_{\nomj}\alpha)\leq\neg\nomi$, where $-@_{\nomj}\alpha\prec -\theta$ is a farthest $@$ node in the $P_3$-part, then we have the following decomposition rule:
$$\infer{(\theta(\bot)\leq\neg\nomi\ \&\ \mathsf{S}\ \Rightarrow\ \nomi_0\leq\neg\nomi_1)\ (\theta(\top)\leq\neg\nomi\ \&\ \nomj\leq\alpha\ \&\ \mathsf{S}\ \Rightarrow\ \nomi_0\leq\neg\nomi_1)}{\theta(@_{\nomj}\alpha)\leq\neg\nomi\ \&\ \mathsf{S}\ \Rightarrow\ \nomi_0\leq\neg\nomi_1}
$$
\item Suppose that we have an inquality $\theta(@_{\nomj}\alpha)\leq\neg\nomi$, where $+@_{\nomj}\alpha\prec -\theta$ is a farthest $@$ node in the $P_3$-part, then we have the following decomposition rule:
$$\infer{(\theta(\top)\leq\neg\nomi\ \&\ \mathsf{S}\ \Rightarrow\ \nomi_0\leq\neg\nomi_1)\ (\theta(\bot)\leq\neg\nomi\ \&\ \alpha\leq\neg\nomj\ \&\ \mathsf{S}\ \Rightarrow\ \nomi_0\leq\neg\nomi_1)}{\theta(@_{\nomj}\alpha)\leq\neg\nomi\ \&\ \mathsf{S}\ \Rightarrow\ \nomi_0\leq\neg\nomi_1}
$$
\end{enumerate}

\item \textbf{Stage 2.2: Decomposing the $P_2$-part}

In the current stage, the following rules are applied to decompose the $P_2$-part of critical branches. Except for two splitting rules for $\lor$ and $\land$, the other rules execute on a single inequality in the antecedent part of the quasi-inequalities and rewrite it into one or two inequalities:
\begin{enumerate}
\item Splitting rules:
$$
\infer{(\nomi\leq\beta\ \&\ \mathsf{S}\ \Rightarrow\ \nomi_0\leq\neg\nomi_1)\ (\nomi\leq\gamma\ \&\ \mathsf{S}\ \Rightarrow\ \nomi_0\leq\neg\nomi_1)}{\nomi\leq\beta\lor\gamma\ \&\ \mathsf{S}\ \Rightarrow\ \nomi_0\leq\neg\nomi_1}
$$
$$
\infer{(\beta\leq\neg\nomi\ \&\ \mathsf{S}\ \Rightarrow\ \nomi_0\leq\neg\nomi_1)\ (\gamma\leq\neg\nomi\ \&\ \mathsf{S}\ \Rightarrow\ \nomi_0\leq\neg\nomi_1)}{\beta\land\gamma\leq\neg\nomi\ \&\ \mathsf{S}\ \Rightarrow\ \nomi_0\leq\neg\nomi_1}
$$
$$
\infer{\nomi\leq\beta\ \ \ \nomi\leq\gamma}{\nomi\leq\beta\land\gamma}
\qquad
\infer{\alpha\leq\neg\nomi\ \ \ \beta\leq\neg\nomi}{\alpha\lor\beta\leq\neg\nomi}
$$
\item Approximation rules:
$$
\infer{\nomj\leq\alpha\ \ \ \nomi\leq\Diamond\nomj}{\nomi\leq\Diamond\alpha}
\qquad
\infer{\alpha\leq\neg\nomj\ \ \ \Box\neg\nomj\leq\neg\nomi}{\Box\alpha\leq\neg\nomi}
$$
$$
\infer{\nomj\leq\alpha}{\nomi\leq @_{\nomj}\alpha}
\qquad
\infer{\alpha\leq\neg\nomj}{@_{\nomj}\alpha\leq\neg\nomi}
$$
$$
\infer{\nomj\leq\alpha\ \ \ \ \ \ \ \beta\leq\neg\nomk\ \ \ \ \ \ \ \nomj\rightarrow\neg\nomk\leq\neg\nomi}{\alpha\rightarrow\beta\leq\neg\nomi}
$$
The nominals introduced by the approximation rules must not occur in the system before applying the rule.
\item Residuation rules:
$$
\infer{\alpha\leq\neg\nomi}{\nomi\leq\neg\alpha}
\qquad
\infer{\nomi\leq\alpha}{\neg\alpha\leq\neg\nomi}
$$
\end{enumerate}

\item \textbf{Stage 2.3: Decomposing the $P_1$-part}

In the current stage, the following rules are applied to decompose the $P_1$-part of the critical branch. Except for two residuation rules for $@$, the other rules execute on a single inequality in the antecedent part of the quasi-inequalities and rewrite it into one or two inequalities.
\begin{enumerate}
\item Splitting rules:
$$\infer{\alpha\leq\beta\ \ \ \alpha\leq\gamma}{\alpha\leq\beta\land\gamma}
\qquad
\infer{\alpha\leq\gamma\ \ \ \beta\leq\gamma}{\alpha\lor\beta\leq\gamma}
$$
\item Residuation rules:
$$\infer{\beta\leq\neg\alpha}{\alpha\leq\neg\beta}
\qquad
\infer{\neg\beta\leq\alpha}{\neg\alpha\leq\beta}
\qquad
\infer{\alpha\leq\blacksquare\beta}{\Diamond\alpha\leq\beta}
\qquad
\infer{\Diamondblack\alpha\leq\beta}{\alpha\leq\Box\beta}
$$
$$
\infer{\alpha\leq\beta\to\gamma}{\alpha\land\beta\leq\gamma}
\qquad
\infer{\alpha\land\neg\beta\leq\gamma}{\alpha\leq\beta\lor\gamma}
\qquad
\infer{\alpha\land\beta\leq\gamma}{\alpha\leq\beta\to\gamma}
$$
$$
\infer{\beta\leq\alpha\to\gamma}{\alpha\land\beta\leq\gamma}
\qquad
\infer{\alpha\land\neg\gamma\leq\beta}{\alpha\leq\beta\lor\gamma}
\qquad
\infer{\beta\leq\alpha\to\gamma}{\alpha\leq\beta\to\gamma}
$$
$$\infer{(\alpha\leq\bot\ \&\ \mathsf{S}\ \Rightarrow\ \nomi_0\leq\neg\nomi_1)\ (\nomj\leq\beta\ \&\ \mathsf{S}\ \Rightarrow\ \nomi_0\leq\neg\nomi_1)}{\alpha\leq@_{\nomj}\beta\ \&\ \mathsf{S}\ \Rightarrow\ \nomi_0\leq\neg\nomi_1}
$$
$$
\infer{(\top\leq\beta\ \&\ \mathsf{S}\ \Rightarrow\ \nomi_0\leq\neg\nomi_1)\ (\alpha\leq\neg\nomj\ \&\ \mathsf{S}\ \Rightarrow\ \nomi_0\leq\neg\nomi_1)}{@_{\nomj}\alpha\leq\beta\ \&\ \mathsf{S}\ \Rightarrow\ \nomi_0\leq\neg\nomi_1}
$$
\end{enumerate}

\item \textbf{Stage 2.4: The Ackermann stage}

In the current stage, we compute the minimal/maximal valuations for propositional variables and use the Ackermann rules to eliminate all the propositional variables.
\begin{enumerate}
\item The right-handed Ackermann rule:
$$\infer{\bigamp_{j=1}^{m}\eta_j(\theta/p)\leq\iota_j(\theta/p)\ \Rightarrow\ \nomi_0\leq\neg\nomi_1}{\bigamp_{i=1}^{n}\theta_i\leq p\ \&\ \bigamp_{j=1}^{m}\eta_j\leq\iota_j\ \Rightarrow\ \nomi_0\leq\neg\nomi_1}$$

where:
\begin{enumerate}
\item $p$ does not occur in $\theta_1, \ldots, \theta_n$;
\item Each $\eta_i$ is positive, and each $\iota_i$ negative in $p$, for $1\leq j\leq m$;
\item $\theta:=\theta_1\lor\ldots\lor\theta_n$. When $n=0$, we define $\theta:=\bot$.
\end{enumerate}

\item The left-handed Ackermann rule:
$$\infer{\bigamp_{j=1}^{m}\eta_j(\theta/p)\leq\iota_j(\theta/p)\ \Rightarrow\ \nomi_0\leq\neg\nomi_1}{\bigamp_{i=1}^{n}p\leq\theta_i\ \&\ \bigamp_{j=1}^{m}\eta_j\leq\iota_j\ \Rightarrow\ \nomi_0\leq\neg\nomi_1}$$

where:
\begin{enumerate}
\item $p$ does not occur in $\theta_1, \ldots, \theta_n$;
\item Each $\eta_i$ is negative, and each $\iota_i$ positive in $p$, for $1\leq j\leq m$;
\item $\theta:=\theta_1\land\ldots\land\theta_n$. When $n=0$, we define $\theta:=\top$.
\end{enumerate}
\end{enumerate}
\end{itemize}

\item \textbf{Output}: If in the previous stage, for some system, the algorithm gets stuck, i.e.\ some propositional variables cannot be eliminated by the reduction rules in Stage 2, then the algorithm stops and output ``failure''. Otherwise, the system after the first-approximation has been reduced to a set of pure quasi-inequalties. Then the output is this set of pure quasi-inequalities and the conjunction of universally quantified (over the individual symbols corresponding to nominals) first-order sentences of their standard translations. 
\end{enumerate}

\begin{remark}
\begin{itemize}
\item There are no preprocessing rules in Stage 1 in $\mathsf{ALBA}^{@}$ and $\mathsf{ALBA}^{@}_{\mathsf{Restricted}}$ compared with $\mathsf{hybrid}$-$\mathsf{ALBA}$ in \cite{ConRob}, i.e.\ there are no distribution rules, splitting rules, and monotone/antitone variable elimination rules in Stage 1. This is because we incorporate these rules in the splitting rules for $\land,\lor$ in Stage 2.2 and the Ackermann rules (where we allow empty minimal valuations) in Stage 2.4.
\item A special feature of $\mathsf{ALBA}^{@}_{\mathsf{Restricted}}$ compared with $\mathsf{ALBA}^{@}$ is that there is no expanded hybrid language needed in $\mathsf{ALBA}^{@}_{\mathsf{Restricted}}$, and there is no tense operators needed. Another feature of $\mathsf{ALBA}^{@}_{\mathsf{Restricted}}$ is that during Stage 2, for each inequality involved, they are of the form $\nomi\leq\gamma$ or $\gamma\leq\neg\nomi$, which means that they can be equivalently translated into hybrid formulas of the form $@_{\nomi}\gamma$ or $\neg@_\nomi \gamma$, as we can see in Section \ref{dSec:Success} and \ref{dSec:Completeness}.
\end{itemize}
\end{remark}

\begin{example}
Consider the formula $\Box\Diamond @_{\nomi}\Diamond p\to\Diamond\Box p$ as given in Example \ref{Example:McKinsey}. We first rewrite it into $$\nomi_0\leq\Box\Diamond @_{\nomi}\Diamond p\ \&\ \Diamond\Box p\leq\neg\nomi_1\ \Rightarrow\ \nomi_0\leq\neg\nomi_1.$$

Now we know that for $\Diamond\Box p\leq\neg\nomi_1$, we cannot use it to compute the minimal valuation, as in the McKinsey formula $\Box\Diamond p\to\Diamond\Box p$ in modal logic. 

For the $\nomi_0\leq\Box\Diamond @_{\nomi}\Diamond p$ part, in standard ways to compute the minimal valuation, we will get $\Diamondblack\nomi_0\leq \Diamond @_{\nomi}\Diamond p$ and we get stuck.

However, if we use the decomposition rules for $P_3$-part, then we can ``jump over'' the outer $\Box\Diamond$ part, and get two systems
$$\nomi_0\leq\Box\Diamond\top\ \&\ \nomi\leq\Diamond p\ \&\ \Diamond\Box p\leq\neg\nomi_1\ \Rightarrow\ \nomi_0\leq\neg\nomi_1$$ 
and 
$$\nomi_0\leq\Box\Diamond\bot\ \Diamond\Box p\leq\neg\nomi_1\ \Rightarrow\ \nomi_0\leq\neg\nomi_1.$$ 
For the first system, we have the following execution:

$\nomi_0\leq\Box\Diamond\top\ \&\ \nomi\leq\Diamond p\ \&\ \Diamond\Box p\leq\neg\nomi_1\ \Rightarrow\ \nomi_0\leq\neg\nomi_1$\\
$\nomi_0\leq\Box\Diamond\top\ \&\ \nomi\leq\Diamond \nomj\ \&\ \nomj\leq p\ \&\ \Diamond\Box p\leq\neg\nomi_1\ \Rightarrow\ \nomi_0\leq\neg\nomi_1$\\
$\nomi_0\leq\Box\Diamond\top\ \&\ \nomi\leq\Diamond \nomj\ \&\ \Diamond\Box\nomj\leq\neg\nomi_1\ \Rightarrow\ \nomi_0\leq\neg\nomi_1$.\\

For the second system, we have the following execution:

$\nomi_0\leq\Box\Diamond\bot\ \Diamond\Box p\leq\neg\nomi_1\ \Rightarrow\ \nomi_0\leq\neg\nomi_1$\\
$\nomi_0\leq\Box\Diamond\bot\ \Diamond\Box\bot\leq\neg\nomi_1\ \Rightarrow\ \nomi_0\leq\neg\nomi_1$.\\

We skip the standard translation step.

As we can see, the execution above is also an execution of $\mathsf{ALBA}^{@}_{\mathsf{Restricted}}$.
\end{example}

\section{Soundness}\label{dSec:Soundness}
In this section, we prove the soundness of the algorithms in the style of \cite{CoPa12,Zh21b}. Since most of the rules in the algorithm is the same as in $\mathsf{hybrid}$-$\mathsf{ALBA}$ in \cite{ConRob}, therefore by dualizing their soundness proof, we get the soundness of the rules already in \cite{ConRob}. Therefore, we only treat the new rules in $\mathsf{ALBA}^{@}$.

\begin{theorem}[Soundness of the algorithm]\label{dThm:Soundness}
If $\mathsf{ALBA}^{@}$ runs successfully on $\phi\to\psi$ and outputs a first-order formula $\mathsf{FO}(\phi\leq\psi)$, then for any frame $\mathbb{F}=(W,R)$, $$\mathbb{F}\Vdash\phi\to\psi\mbox{ iff }\mathbb{F}\models\mathsf{FO}(\phi\to\psi).$$
\end{theorem}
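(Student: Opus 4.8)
The plan is to establish the biconditional by propagating a single semantic invariant through the run of $\mathsf{ALBA}^{@}$: at every stage the current (finite set of) quasi-inequalities is valid on $\mathbb{F}$ if and only if the input inequality $\phi\leq\psi$ is. Writing the execution as a sequence of stages transforming $\phi\leq\psi$ into the set $\mathsf{Pure}(\phi\to\psi)$ of pure quasi-inequalities, I would show that each single rule application replaces the system it acts on either by an equivalent system or by a pair of systems whose joint validity is equivalent to that of the original, while leaving the other systems in the set untouched; hence validity of the whole set is preserved at each step. Chaining these equivalences, together with the elementary observation that $\mathbb{F}\Vdash\phi\to\psi$ iff $\mathbb{F}\Vdash\phi\leq\psi$ and with Proposition \ref{dProp:ST:ineq:quasi:mega} (which yields $\mathbb{F}\Vdash\mathsf{Pure}(\phi\to\psi)$ iff $\mathbb{F}\models\mathsf{FO}(\phi\to\psi)$, after universally closing over the nominals), gives the desired statement.

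For the bookkeeping I would sort the rules by the semantic level at which each is sound. The splitting, approximation and residuation rules of Stages 2.2 and 2.3 each preserve validity on \emph{every} model $\mathbb{M}$ based on $\mathbb{F}$ (using freshness of the nominals introduced by the approximation rules, so that quantifying over $\mathbb{F}$ also quantifies over their interpretations), and are therefore a fortiori frame-sound; the first-approximation rule and the two Ackermann rules of Stage 2.4 are sound only at the level of the frame, via the usual minimal/maximal-valuation argument, since frame validity is precisely quantification over all valuations and the minimal valuation lives in the complex algebra of $\mathbb{F}$. All of these coincide (up to the $\blacksquare/\Diamondblack$-dualisation) with rules already present in $\mathsf{hybrid}$-$\mathsf{ALBA}$, so I would dispatch them by invoking the soundness arguments of \cite{ConRob}, and concentrate on the genuinely new rules, namely the four $P_3$-decomposition rules of Stage 2.1 (and the entirely analogous $@$-splitting residuation rules of Stage 2.3).

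The crux, and the step I expect to be the main obstacle, is the soundness of the Stage 2.1 decomposition rules; these are the semantic counterpart of the provable decomposition $@_{\nomi}\theta(@_{\nomj}\alpha)\leftrightarrow@_{\nomi}\theta(\bot)\lor(@_{\nomi}\theta(\top)\land@_{\nomj}\alpha)$ recorded earlier. The key fact is that $@_{\nomj}\alpha$ is a \emph{global} formula: in any fixed model its truth value is independent of the point of evaluation, so it is either true everywhere or false everywhere, and $\mathbb{M}\Vdash\nomj\leq\alpha$ holds exactly when it is globally true (dually, $\mathbb{M}\Vdash\alpha\leq\neg\nomj$ exactly when it is globally false). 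Taking the first rule as representative ($+@_{\nomj}\alpha\prec+\theta$, acting on $\nomi\leq\theta(@_{\nomj}\alpha)$), I would split on these two cases. When $@_{\nomj}\alpha$ is globally false, $\theta(@_{\nomj}\alpha)$ and $\theta(\bot)$ have the same truth set, so the premise-system coincides with the first conclusion-system while the second is vacuously valid, its antecedent containing the then-unsatisfiable $\nomj\leq\alpha$. When $@_{\nomj}\alpha$ is globally true, $\theta(@_{\nomj}\alpha)$ and $\theta(\top)$ have the same truth set, so the premise-system coincides with the second conclusion-system, while the first conclusion-system follows because monotonicity of $\theta$ in the positive occurrence gives $\theta(\bot)\leq\theta(\top)$, so its strictly stronger antecedent $\nomi\leq\theta(\bot)$ still forces $\nomi\leq\theta(\top)$. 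In either case the single premise-system is equivalent, on $\mathbb{M}$, to the conjunction of the two conclusion-systems. The remaining three rules are handled by the same globality argument, replacing monotonicity by antitonicity of $\theta$ where $-@_{\nomj}\alpha$ occurs (so that $\theta(\top)\leq\theta(\bot)$ supplies the filler system), swapping the roles of $\nomj\leq\alpha$ and $\alpha\leq\neg\nomj$ according to the sign, and reversing the orientation of $\leq$ when the active inequality is $\theta(@_{\nomj}\alpha)\leq\neg\nomi$. Since all four rules are sound already on each individual model, they compose without friction with the frame-level first-approximation and Ackermann steps.
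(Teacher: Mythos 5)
Your proposal is correct and follows essentially the same route as the paper: the same chain of frame-level equivalences from $\phi\to\psi$ through the first approximation and $\mathsf{Pure}(\phi\to\psi)$ to the standard translation, with the inherited rules dispatched by citing \cite{ConRob} and the new Stage 2.1 decomposition rules handled exactly as the paper does, via the globality of $@_{\nomj}\alpha$ (globally true or globally false in any fixed model) combined with monotonicity of $\theta$ in the displayed occurrence. The only cosmetic difference is that you argue directly at the level of whole systems where the paper first proves the pointwise disjunctive equivalence for the single inequality and then applies the distribution law $((\alpha_1\lor\alpha_2)\land\beta\to\gamma)\leftrightarrow(\alpha_1\land\beta\to\gamma)\land(\alpha_2\land\beta\to\gamma)$.
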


\begin{proof}
The proof is similar to \cite[Theorem 8.1]{CoPa12}. Let $\phi\to\psi$ denote the input formula, $\nomi_0\leq\phi\ \&\ \psi\leq\neg\nomi_1\ \Rightarrow\ \nomi_0\leq\neg\nomi_1$ denote the system after the first-approximation rule, let $\textsf{Pure}(\phi\to\psi)$ denote the set of pure quasi-inequalities after Stage 2, let $\mathsf{FO(\phi\to\psi)}$ denote resulting first-order formula in Stage 3, then it suffices to show the equivalence from (\ref{dbCrct:Eqn0}) to (\ref{dbCrct:Eqn4}) given below:
\begin{eqnarray}
&&\mathbb{F}\Vdash\phi\to\psi\label{dbCrct:Eqn0}\\
&&\mathbb{F}\Vdash\nomi_0\leq\phi\ \&\ \psi\leq\neg\nomi_1\ \Rightarrow\ \nomi_0\leq\neg\nomi_1\label{dbCrct:Eqn2}\\
&&\mathbb{F}\Vdash\textsf{Pure}(\phi\to\psi)\label{dbCrct:Eqn3}\\
&&\mathbb{F}\vDash\mathsf{FO(\phi\to\psi)}\label{dbCrct:Eqn4}
\end{eqnarray}
The equivalence between (\ref{dbCrct:Eqn0}) and (\ref{dbCrct:Eqn2}) follows from the soundness of the first-approximation rule;

The equivalence between (\ref{dbCrct:Eqn2}) and (\ref{dbCrct:Eqn3}) follows from the soundness of the rules in Stage 2;

The equivalence between (\ref{dbCrct:Eqn3}) and (\ref{dbCrct:Eqn4}) follows from Proposition \ref{dProp:ST:ineq:quasi:mega}.
\end{proof}

In the remainder of the section, we prove the soundness of the rules in Stage 2.1, the splitting rules creating two systems in Stage 2.2, the approximation rule for $\to$ in Stage 2.2 and the residuation rules for $@$ in Stage 2.3.

For the soundness of a rule we mean that for any frame $\mathbb{F}$, the system before the application of a rule is valid in $\mathbb{F}$ iff the systems after the application of the rule is valid in $\mathbb{F}$. 

\begin{proposition}
The rules in Stage 2.1 are sound.
\end{proposition}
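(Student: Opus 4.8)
The plan is to reduce the soundness of each of the four decomposition rules to a single semantic observation: a satisfaction statement $@_{\nomj}\alpha$ behaves as a \emph{Boolean constant}, i.e.\ for every model $\mathbb{M}=(\mathbb{F},V)$ its truth set is either all of $W$ or $\emptyset$. Concretely, $V(@_{\nomj}\alpha)=W$ exactly when $\mathbb{M}\Vdash\nomj\leq\alpha$, and $V(@_{\nomj}\alpha)=\emptyset$ exactly when $\mathbb{M}\Vdash\alpha\leq\neg\nomj$. This is immediate from the clause $\mathbb{M},w\Vdash@_{\nomj}\alpha$ iff $\mathbb{M},V(\nomj)\Vdash\alpha$, whose right-hand side does not mention $w$. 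Recall that soundness of a rule means: for every frame $\mathbb{F}$, the premise system is valid on $\mathbb{F}$ iff all conclusion systems are.

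From this I extract two auxiliary facts about the distinguished occurrence of $@_{\nomj}\alpha$ in $\theta$. First, \emph{equivalent substitution}: since $@_{\nomj}\alpha$ is globally equivalent to $\top$ when $V(@_{\nomj}\alpha)=W$ and to $\bot$ when $V(@_{\nomj}\alpha)=\emptyset$, replacing it by the corresponding constant leaves the truth set of $\theta$ unchanged, so
\[
V(\theta(@_{\nomj}\alpha))=\begin{cases}V(\theta(\top))&\text{if }\mathbb{M}\Vdash\nomj\leq\alpha,\\ V(\theta(\bot))&\text{if }\mathbb{M}\Vdash\alpha\leq\neg\nomj.\end{cases}
\]
This is the semantic content of the provable equivalence recorded on page~\pageref{dpage:decomposition:Equivalence}, and it holds irrespective of the polarity of the occurrence. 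Second, \emph{monotonicity}: the polarity condition attached to each rule tells us whether $\theta$ acts monotonically or antitonically in the displayed position, and hence in which direction the sets $V(\theta(\bot))$, $V(\theta(@_{\nomj}\alpha))$, $V(\theta(\top))$ are nested.

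With these in hand I verify the validity iff for each rule over an arbitrary $\mathbb{F}$. I treat rule~(a) ($\nomi\leq\theta(@_{\nomj}\alpha)$, $+@_{\nomj}\alpha\prec+\theta$) in full; the other three are dualisations. For the direction ``premise $\Rightarrow$ both conclusions'', fix a valuation $V$ satisfying the antecedent of a conclusion system together with $\mathsf{S}$. For the system carrying the side condition ($\nomi\leq\theta(\top)\ \&\ \nomj\leq\alpha\ \&\ \mathsf{S}$), the side condition $\nomj\leq\alpha$ forces $V(@_{\nomj}\alpha)=W$, so equivalent substitution turns $\nomi\leq\theta(\top)$ into $\nomi\leq\theta(@_{\nomj}\alpha)$, and validity of the premise yields $\nomi_0\leq\neg\nomi_1$. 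For the system without a side condition ($\nomi\leq\theta(\bot)\ \&\ \mathsf{S}$), I instead use monotonicity: as $+@_{\nomj}\alpha\prec+\theta$ gives $V(\theta(\bot))\subseteq V(\theta(@_{\nomj}\alpha))$, the hypothesis $\nomi\leq\theta(\bot)$ entails $\nomi\leq\theta(@_{\nomj}\alpha)$ for \emph{every} valuation, and again the premise applies. For the converse, fix $V$ satisfying $\nomi\leq\theta(@_{\nomj}\alpha)\ \&\ \mathsf{S}$ and split on the Boolean value of $@_{\nomj}\alpha$: if $V(@_{\nomj}\alpha)=W$ then $\mathbb{M}\Vdash\nomj\leq\alpha$ and, by equivalent substitution, $\nomi\leq\theta(\top)$, so the side-condition system applies; if $V(@_{\nomj}\alpha)=\emptyset$ then $\nomi\leq\theta(\bot)$, so the other system applies.

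The only genuinely delicate point — and the step I expect to be the main obstacle — is the polarity bookkeeping that makes the side-condition-free branch go through. In each rule exactly one produced system omits a side condition, and for that system the forward direction cannot use equivalent substitution, since we have no control over the global value of $@_{\nomj}\alpha$; it must rely instead on the nesting of $V(\theta(\bot))$, $V(\theta(@_{\nomj}\alpha))$, $V(\theta(\top))$ pointing in the direction that makes the substituted-constant inequality at least as strong as the original. One must therefore check, for each of the four sign combinations and for the relevant side of the inequality ($\nomi\leq\cdots$ versus $\cdots\leq\neg\nomi$), that the prescribed polarity yields monotonicity in the correct direction; this is precisely why the four cases carry the four distinct polarity annotations and pair $\top$/$\bot$ with the side conditions $\nomj\leq\alpha$/$\alpha\leq\neg\nomj$ exactly as displayed. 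Once this matching is confirmed, rules~(b)–(d) are obtained from rule~(a) by interchanging $\top\leftrightarrow\bot$, monotone $\leftrightarrow$ antitone, and the two sides of $\leq$, with $\nomj\leq\alpha$ replaced by $\alpha\leq\neg\nomj$ in the $V(@_{\nomj}\alpha)=\emptyset$ branch.
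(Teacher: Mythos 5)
Your proposal is correct and follows essentially the same route as the paper's proof: both hinge on the observation that $@_{\nomj}\alpha$ is globally constant (so one splits on whether its truth set is $W$ or $\emptyset$), use equivalent substitution of $\top$ for the branch carrying the side condition $\nomj\leq\alpha$, and invoke monotonicity of $\theta$ in the positive occurrence for the $\theta(\bot)$ branch. The only difference is presentational: the paper reduces the system-level equivalence to a single per-valuation iff via the tautology $(\alpha_1\land\beta\to\gamma)\land(\alpha_2\land\beta\to\gamma)\leftrightarrow((\alpha_1\lor\alpha_2)\land\beta\to\gamma)$, whereas you argue valuation by valuation directly.
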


\begin{proof}
For the soundness of the rules in Stage 2.1, we only prove for (a), the other three rules are similar. 

By the validity of the equivalence $$(\alpha_1\land\beta\to\gamma)\land(\alpha_2\land\beta\to\gamma)\leftrightarrow((\alpha_1\lor\alpha_2)\land\beta\to\gamma),$$
it suffices to show that for any model $(\mathbb{F},V)$,
$$\mathbb{F},V\Vdash\nomi\leq\theta(@_{\nomj}\alpha)\mbox{ iff }(\mathbb{F},V\Vdash\nomi\leq\theta(\bot))\mbox{ or }(\mathbb{F},V\Vdash\nomi\leq\theta(\top)\mbox{ and }\mathbb{F},V\Vdash\nomj\leq\alpha).$$
Since $\mathbb{F},V\Vdash\nomi\leq\gamma$ iff $\mathbb{F},V,V(\nomi)\Vdash\gamma$, it suffices to show that
$$\mathbb{F},V,V(\nomi)\Vdash\theta(@_{\nomj}\alpha)\mbox{ iff }(\mathbb{F},V,V(\nomi)\Vdash\theta(\bot))\mbox{ or }(\mathbb{F},V,V(\nomi)\Vdash\theta(\top)\mbox{ and }\mathbb{F},V,V(\nomj)\Vdash\alpha).$$

$\Rightarrow:$ Assmue that $\mathbb{F},V,V(\nomi)\Vdash\theta(@_{\nomj}\alpha)$. Since $@_{\nomj}\alpha$ is either globally true or globally false, we have that there are two cases: $\mathbb{F},V\Vdash@_{\nomj}\alpha\leftrightarrow\bot$ or $\mathbb{F},V\Vdash@_{\nomj}\alpha\leftrightarrow\top$. In the first case we have $\mathbb{F},V,V(\nomi)\Vdash\theta(\bot)$, in the second case we have $\mathbb{F},V,V(\nomi)\Vdash\theta(\top)$ and $\mathbb{F},V\Vdash@_{\nomj}\alpha$, i.e.\ $\mathbb{F},V,V(\nomj)\Vdash\alpha$.

$\Leftarrow:$ Assume that $$(\mathbb{F},V,V(\nomi)\Vdash\theta(\bot))\mbox{ or }(\mathbb{F},V,V(\nomi)\Vdash\theta(\top)\mbox{ and }\mathbb{F},V,V(\nomj)\Vdash\alpha).$$
If $\mathbb{F},V,V(\nomi)\Vdash\theta(\bot)$ then by $+@_{\nomj}(\alpha)\prec+\theta$ we have that $\theta$ is monotone in the position of $@_{\nomj}\alpha$, so $\mathbb{F},V,V(\nomi)\Vdash\theta(@_{\nomj}\alpha)$. 

If $\mathbb{F},V,V(\nomi)\Vdash\theta(\top)\mbox{ and }\mathbb{F},V,V(\nomj)\Vdash\alpha$, then $\mathbb{F},V\Vdash @_{\nomj}\alpha$, therefore $\mathbb{F},V\Vdash @_{\nomj}\alpha\leftrightarrow\top$, so $\mathbb{F},V,V(\nomi)\Vdash\theta(@_{\nomj}\alpha)$.
\end{proof}

\begin{proposition}
The splitting rules creating two systems in Stage 2.2 are sound.
\end{proposition}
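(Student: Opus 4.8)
The plan is to reduce the soundness of each of the two splitting rules to a single elementary fact about nominals: since $\nomi$ is interpreted as a singleton, an inequality of the form $\nomi\leq\chi$ (resp.\ $\chi\leq\neg\nomi$) is determined entirely by what happens at the point $V(\nomi)$. Concretely, for any model $\mathbb{M}=(\mathbb{F},V)$ I would first record the two equivalences $\mathbb{M}\Vdash\nomi\leq\chi$ iff $\mathbb{M},V(\nomi)\Vdash\chi$, and $\mathbb{M}\Vdash\chi\leq\neg\nomi$ iff $\mathbb{M},V(\nomi)\nVdash\chi$. These collapse the universal quantifier ``for all $w$'' hidden in $\leq$ down to the single world denoted by $\nomi$, and they are exactly the observations already used in the proof of the Stage 2.1 rules.

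From these, the key distributive identities follow immediately: $\mathbb{M}\Vdash\nomi\leq\beta\lor\gamma$ iff $\mathbb{M},V(\nomi)\Vdash\beta\lor\gamma$ iff $(\mathbb{M}\Vdash\nomi\leq\beta$ or $\mathbb{M}\Vdash\nomi\leq\gamma)$; dually $\mathbb{M}\Vdash\beta\land\gamma\leq\neg\nomi$ iff $\mathbb{M},V(\nomi)\nVdash\beta\land\gamma$ iff $(\mathbb{M}\Vdash\beta\leq\neg\nomi$ or $\mathbb{M}\Vdash\gamma\leq\neg\nomi)$. Establishing these two equivalences is the technical core of the argument.

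Then, for the first rule, I would prove the validity equivalence by fixing a frame $\mathbb{F}$ and unfolding validity of the quasi-inequalities over all valuations $V$. For the downward direction, since $\nomi\leq\beta$ (resp.\ $\nomi\leq\gamma$) entails $\nomi\leq\beta\lor\gamma$, any $V$ witnessing the antecedent of one output system also witnesses the antecedent of the input system, so validity of the input yields $\nomi_0\leq\neg\nomi_1$. For the upward direction, fix $V$ satisfying $\nomi\leq\beta\lor\gamma$ together with $\mathsf{S}$; by the distributive identity, $V$ satisfies $\nomi\leq\beta$ or $\nomi\leq\gamma$, and in either case validity of the corresponding output system applied to the same $V$ gives the conclusion. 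The second rule is handled identically using the dual identity.

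The main subtlety, and the only place the argument could go wrong, is that this disjunctive splitting is sound \emph{only} because $\nomi$ is a nominal: for a general formula $\chi$ in place of $\nomi$, $\chi\leq\beta\lor\gamma$ does not imply $\chi\leq\beta$ or $\chi\leq\gamma$, since which disjunct holds may vary from world to world. I must therefore invoke the singleton semantics at precisely the step where the disjunction is pulled out of the inequality, and keep the valuation $V$ fixed throughout the upward direction, making the case split between $\beta$ and $\gamma$ per valuation rather than globally. This bookkeeping is routine once the two equivalences above are in place.
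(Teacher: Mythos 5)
Your proposal is correct and follows essentially the same route as the paper: both reduce the soundness of the rule to the per-valuation equivalence $\mathbb{F},V\Vdash\nomi\leq\beta\lor\gamma$ iff ($\mathbb{F},V\Vdash\nomi\leq\beta$ or $\mathbb{F},V\Vdash\nomi\leq\gamma$), obtained by collapsing the inequality to the single world $V(\nomi)$ via the singleton semantics of nominals, and dually for $\land$. The only cosmetic difference is that the paper packages your two-direction argument over the quasi-inequalities into the propositional tautology $(\alpha_1\land\beta\to\gamma)\land(\alpha_2\land\beta\to\gamma)\leftrightarrow((\alpha_1\lor\alpha_2)\land\beta\to\gamma)$, whereas you spell out the monotonicity and case-split steps explicitly.
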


\begin{proof}
We only prove it for the splitting rule for $\lor$, the splitting rule for $\land$ is similar. By the validity of the equivalence $$(\alpha_1\land\beta\to\gamma)\land(\alpha_2\land\beta\to\gamma)\leftrightarrow((\alpha_1\lor\alpha_2)\land\beta\to\gamma),$$
it suffices to show that for any frame $\mathbb{F}$ and any valuation $V$ on $\mathbb{F}$,
$$\mathbb{F},V\Vdash\nomi\leq\beta\lor\gamma\mbox{ iff }(\mathbb{F},V\Vdash\nomi\leq\beta\mbox{ or }\mathbb{F},V\Vdash\nomi\leq\gamma),$$
which is equivalent to 
$$\mathbb{F},V,V(\nomi)\Vdash\beta\lor\gamma\mbox{ iff }(\mathbb{F},V,V(\nomi)\Vdash\beta\mbox{ or }\mathbb{F},V,V(\nomi)\Vdash\gamma),$$
which follows from the semantics.
\end{proof}

For the soundness of the approximation rule for $\to$ in Stage 2.2, see \cite{Zh21c}.

\begin{proposition}
The residuation rules for $@$ in Stage 2.3 are sound.
\end{proposition}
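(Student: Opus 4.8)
The plan is to mirror the template of the two preceding soundness proofs: I would reduce the frame-soundness of each of the two $@$-residuation rules to a single pointwise semantic equivalence about inequalities, and then lift this to quasi-inequalities exactly as in the splitting-rule proof, via the validity of $(A_1\land\mathsf{S}\to C)\land(A_2\land\mathsf{S}\to C)\leftrightarrow((A_1\lor A_2)\land\mathsf{S}\to C)$ read off at each valuation. For the soundness of a rule I must show, for every frame $\mathbb{F}$, that the system above the line is valid iff both systems below the line are.

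The engine of the argument is the observation already exploited in Stage 2.1: for any model $(\mathbb{F},V)$ and any nominal $\nomj$, the formula $@_{\nomj}\gamma$ is semantically constant, being globally true if $\mathbb{F},V,V(\nomj)\Vdash\gamma$ and globally false otherwise. I would first record this, together with the two translations $@_{\nomj}\gamma$ globally true $\Leftrightarrow\ \mathbb{F},V\Vdash\nomj\leq\gamma$ and $@_{\nomj}\gamma$ globally false $\Leftrightarrow\ \mathbb{F},V\Vdash\gamma\leq\neg\nomj$.

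For the first rule I would then establish, for every valuation $V$,
$$\mathbb{F},V\Vdash\alpha\leq@_{\nomj}\beta \quad\Longleftrightarrow\quad \mathbb{F},V\Vdash\alpha\leq\bot \ \mbox{ or }\ \mathbb{F},V\Vdash\nomj\leq\beta,$$
by splitting on whether $@_{\nomj}\beta$ is globally true (the inequality then holds vacuously and $\nomj\leq\beta$ holds) or globally false (the inequality then holds iff $\alpha$ is empty, i.e.\ $\alpha\leq\bot$). Symmetrically, for the second rule I would show
$$\mathbb{F},V\Vdash@_{\nomj}\alpha\leq\beta \quad\Longleftrightarrow\quad \mathbb{F},V\Vdash\top\leq\beta \ \mbox{ or }\ \mathbb{F},V\Vdash\alpha\leq\neg\nomj,$$
splitting on whether $@_{\nomj}\alpha$ is globally false (the inequality then holds vacuously, and $\alpha\leq\neg\nomj$ holds) or globally true (the inequality then holds iff $\beta$ is everything, i.e.\ $\top\leq\beta$). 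Substituting these disjunctive characterisations for the antecedent inequality in the system and distributing the implication over the disjunction yields exactly the two output systems; since the equivalence holds at each $V$ separately, universally quantifying over $V$ lifts it to frame-validity.

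The point to get right is the bookkeeping of the boundary cases rather than any genuine difficulty: I must check that the disjunction is inclusive and that the two characterising inequalities are the correct duals (globally-true corresponding to $\nomj\leq\beta$, globally-false to $\alpha\leq\neg\nomj$), and confirm that in a mixed case such as $\alpha\leq\bot$ holding while $\nomj\leq\beta$ fails, both sides of the equivalence still agree. Once these verifications are in place the lift to quasi-inequalities is routine and identical in shape to the splitting-rule argument, so I expect no real obstacle beyond this case analysis.
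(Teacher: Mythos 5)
Your proposal is correct and follows essentially the same route as the paper: both reduce the frame-soundness of each $@$-residuation rule, via the propositional equivalence $(\alpha_1\land\beta\to\gamma)\land(\alpha_2\land\beta\to\gamma)\leftrightarrow((\alpha_1\lor\alpha_2)\land\beta\to\gamma)$, to the pointwise equivalences $\mathbb{F},V\Vdash\alpha\leq@_{\nomj}\beta$ iff ($\mathbb{F},V\Vdash\alpha\leq\bot$ or $\mathbb{F},V\Vdash\nomj\leq\beta$) and its dual. The only difference is that the paper discharges this last step by citing the soundness of the ($@$-R-Res) rule in \cite{ConRob}, whereas you prove it directly from the global constancy of $@$-formulas, which makes your argument self-contained but not substantively different.
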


\begin{proof}
We only prove it for the residuation rule for $@$ where $@_{\nomj}\alpha$ is on the right-hand side, the other rule is similar.

By the validity of the equivalence $$(\alpha_1\land\beta\to\gamma)\land(\alpha_2\land\beta\to\gamma)\leftrightarrow((\alpha_1\lor\alpha_2)\land\beta\to\gamma),$$
it suffices to show that for any frame $\mathbb{F}$ and any valuation $V$ on $\mathbb{F}$,
$$\mathbb{F},V\Vdash\alpha\leq@_{\nomj}\beta\mbox{ iff }(\mathbb{F},V\Vdash\alpha\leq\bot\mbox{ or }\mathbb{F},V\Vdash\nomj\leq\beta),$$
which follows from the soundness of the ($@$-R-Res) rule in \cite{ConRob}.
\end{proof}
Since the algorithm $\mathsf{ALBA}^{@}_{\mathsf{Restricted}}$ is a restricted version of $\mathsf{ALBA}^{@}$, its soundness follows from the soundness of $\mathsf{ALBA}^{@}$.

\section{Success of $\mathsf{ALBA}^{@}$ and $\mathsf{ALBA}^{@}_{\mathsf{Restricted}}$}\label{dSec:Success}

In this section, we show that $\mathsf{ALBA}^{@}$ succeeds on all extended inductive formulas and that $\mathsf{ALBA}^{@}_{\mathsf{Restricted}}$ succeeds on all extended skeletal formulas. The proof is similar to \cite[Section 7]{Zh21c}, but we will stress on the special shape of the inequalities involved in the execution of $\mathsf{ALBA}^{@}_{\mathsf{Restricted}}$.

\begin{theorem}\label{dThm:Success}
$\mathsf{ALBA}^{@}$ succeeds on all extended inductive formulas and $\mathsf{ALBA}^{@}_{\mathsf{Restricted}}$ succeeds on all extended skeletal formulas.
\end{theorem}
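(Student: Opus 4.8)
The plan is to fix, for the input inequality $\phi\leq\psi$, a witnessing pair $(\Omega,\epsilon)$ as in Definition~\ref{daInducive:Ineq:Def}, and to show that no execution branch of $\mathsf{ALBA}^{@}$ ever gets stuck, so that every propositional variable is eventually removed by an Ackermann rule. The guiding invariant is that the reduction stages consume each $\epsilon$-critical branch exactly along its extended-good factorisation $P_3,P_2,P_1$, leaving every $\epsilon$-critical variable \emph{displayed}, i.e.\ isolated on one side of an inequality, while every non-critical occurrence stays buried inside a displayed $\epsilon^\partial$-uniform subformula. After the first-approximation rule has produced $\nomi_0\leq\phi\ \&\ \psi\leq\neg\nomi_1\ \Rightarrow\ \nomi_0\leq\neg\nomi_1$, I would argue by induction on the lengths of the critical branches that every antecedent inequality can be brought into this displayed form, termination being clear since each rule strictly decreases connective complexity.

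The core of the argument is a node-by-node match between the connectives on a critical branch and the applicable rules. For the $P_3$-part I invoke Stage~2.1: the node of $P_3$ farthest from the root is an $@$, so the subformula $@_{\nomj}\alpha$ beneath it is a sentence, globally true or globally false, and the Stage~2.1 rule splits the system into a $\theta(\bot)$-case and a $\theta(\top)\ \&\ \nomj\leq\alpha$-case (dually for the negative polarities). The effect is that the entire outer wrapper $\theta$ collapses to a pure constant and the critical branch is \emph{reset} to the fresh inequality $\nomj\leq\alpha$ (or $\alpha\leq\neg\nomj$), whose remaining critical branch is exactly $P_2$ followed by $P_1$. The $P_2$-part consists of outer nodes, each stripped off by a Stage~2.2 splitting, approximation or residuation rule at the cost of a fresh nominal; the $P_1$-part consists of inner nodes, each treated by a Stage~2.3 residuation rule that displaces the non-critical child $\gamma$ of a binary node to the opposite side of the inequality. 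Clause~(2a) of Definition~\ref{daInducive:Ineq:Def}, that each such $\gamma$ is $\epsilon^\partial$-uniform, guarantees that displacing $\gamma$ does not disturb the polarity bookkeeping of any variable.

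Once every critical branch is displayed I enter Stage~2.4, linearise $<_\Omega$, and eliminate variables from $<_\Omega$-maximal down to $<_\Omega$-minimal. The claim to be verified at each step is that the current variable $p$ satisfies the hypotheses of the matching Ackermann rule: its critical inequalities have the displayed shape $\theta\leq p$ when $\epsilon(p)=1$ (resp.\ $p\leq\theta$ when $\epsilon(p)=\partial$) with $p$ absent from $\theta$, so $p$ does not occur in the candidate minimal (resp.\ maximal) valuation; and by clause~(2a) every surviving occurrence of $p$---including occurrences copied in when the minimal valuations of already-eliminated higher variables are substituted---carries the sign prescribed by $\epsilon^\partial$, hence the polarity demanded in the side inequalities $\eta_j\leq\iota_j$. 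Clause~(2b), that every variable appearing in a displaced $\gamma$ is strictly below $p_i$ in $<_\Omega$, is exactly what makes this maximal-first ordering coherent: the valuation computed for $p_i$ can mention only strictly lower variables, so no circularity arises and the elimination terminates in a set of pure quasi-inequalities.

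For $\mathsf{ALBA}^{@}_{\mathsf{Restricted}}$ on extended skeletal formulas the path $P_1$ has length $0$, so Stage~2.3 is never invoked and the same argument applies unchanged but shorter; here the displayed critical inequalities are literally $\nomi\leq p$ or $p\leq\neg\nomi$ and the minimal/maximal valuations are joins/meets of nominals. In this case I would additionally carry the invariant that every inequality arising in Stage~2 has the form $\nomi\leq\gamma$ or $\gamma\leq\neg\nomi$; this holds for the first-approximation output and is preserved by each Stage~2.1, 2.2 and~2.4 rule by a routine case inspection, which is precisely why the tense operators $\blacksquare,\Diamondblack$ are never introduced and the expanded language is not needed. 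The step I expect to be the main obstacle is the Stage~2.4 bookkeeping: verifying rigorously that eliminating higher variables first never destroys the displayed/uniform shape required for the next variable, i.e.\ that the polarity promised by clause~(2a) genuinely survives substitution of the earlier valuations. This is the familiar crux of unified-correspondence success proofs; the one genuinely new point is the Stage~2.1 reset, where I must confirm that collapsing $\theta$ to $\top$ or $\bot$ leaves the critical branches of all other variables intact, so that both resulting systems are again $(\Omega,\epsilon)$-extended inductive and fall under the induction hypothesis.
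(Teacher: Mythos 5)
Your proposal follows essentially the same route as the paper: its Lemmas on Substages 2.1--2.4 establish exactly your invariant, namely that Stage~2.1 strips the $P_3$-parts and ``resets'' each critical branch to an $(\Omega,\epsilon)$-inductive inequality $\nomj\leq\alpha$ or $\alpha\leq\neg\nomj$ while keeping the $\theta(\top)/\theta(\bot)$ systems extended inductive, Stage~2.2 strips the $P_2$-parts, Stage~2.3 displays each critical variable against an $\epsilon^{\partial}$-uniform residuum, and the Ackermann rules then apply; for $\mathsf{ALBA}^{@}_{\mathsf{Restricted}}$ the paper likewise notes that skeletal branches make Stage~2.3 vacuous and that every inequality retains the form $\nomi\leq\gamma$ or $\gamma\leq\neg\nomi$. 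The one avoidable complication is your Stage~2.4 elimination order: if you eliminate $<_\Omega$-minimal variables first rather than maximal-first, clause~(2b) guarantees that each minimal valuation is already pure at the moment it is substituted, so no new variable occurrences are ever created and the substitution-polarity bookkeeping you single out as the main obstacle disappears entirely --- which is why the paper can treat the applicability of the Ackermann rules as an immediate observation.
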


\begin{lemma}\label{dLemma:Substage:2:1}
Given a system $\nomi_0\leq\phi\ \&\ \psi\leq\neg\nomi_1\ \Rightarrow\ \nomi_0\leq\neg\nomi_1$ obtained from Stage 1 where $+\phi$ and $-\psi$ are $(\Omega,\epsilon)$-extended inductive, by applying the rules in Stage 2.1 exhaustively, for each quasi-inequality $\mathsf{S}\ \Rightarrow\ \nomi_0\leq\neg\nomi_1$ obtained, the inequalities in $\mathsf{S}$ are in one of the following forms:
\begin{itemize}
\item $\nomk\leq\beta$, where $+\beta$ is $(\Omega,\epsilon)$-inductive;
\item $\beta\leq\neg\nomk$, where $-\beta$ is $(\Omega,\epsilon)$-inductive.
\end{itemize}
\end{lemma}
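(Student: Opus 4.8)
The plan is to run Stage 2.1 under a fixed \emph{invariant} and drive it to termination by a well-founded measure. The invariant is: every inequality occurring in the antecedent $\mathsf{S}$ of the current quasi-inequality is either of the form $\nomk\leq\beta$ with $+\beta$ being $(\Omega,\epsilon)$-extended inductive, or of the form $\beta\leq\neg\nomk$ with $-\beta$ being $(\Omega,\epsilon)$-extended inductive. This holds at the start of Stage 2.1, since the antecedent is then $\nomi_0\leq\phi\ \&\ \psi\leq\neg\nomi_1$ and $+\phi$, $-\psi$ are extended inductive by hypothesis. Since ``$(\Omega,\epsilon)$-inductive'' is exactly the special case of ``extended inductive'' in which every critical branch is a \emph{good} branch (i.e.\ has empty $P_3$-part), the claim will follow once I show (a) that the invariant is preserved by every rule of Stage 2.1, and (b) that when no Stage 2.1 rule applies any more, every $\beta$ in the system is in fact $(\Omega,\epsilon)$-inductive.

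For (a) it suffices to analyse rule (a), the remaining three rules being dual under the sign/orientation symmetry built into the two allowed forms. Fix an inequality $\nomi\leq\theta(@_\nomj\alpha)$ in $\mathsf{S}$ with $+\theta$ extended inductive and $+@_\nomj\alpha\prec+\theta$ a farthest $@$-node of a $P_3$-part; the rule replaces it by $\nomi\leq\theta(\bot)$ and by $\nomi\leq\theta(\top)\ \&\ \nomj\leq\alpha$. The context-inequalities $\nomi\leq\theta(\bot)$ and $\nomi\leq\theta(\top)$ are again of the form $\nomk\leq\beta$; because $\bot,\top$ carry no propositional variables, the $\epsilon$-critical branches of $+\theta(\bot)$ and $+\theta(\top)$ are precisely those $\epsilon$-critical branches of $+\theta$ not passing through the deleted occurrence of $@_\nomj\alpha$, with all their signs and all SRR side-conditions (the $\epsilon^\partial$-uniformity of, and the $<_\Omega$-bounds on, the side-formulas $\gamma$) inherited unchanged, so both remain extended inductive. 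The spawned inequality $\nomj\leq\alpha$ is of the form $\nomk\leq\beta$, and since the cut $@$-node is positive its second argument $\alpha$ inherits the sign $+$; as $+\alpha$ is a signed subtree of $+\theta$ and sign, order-type and $<_\Omega$ all restrict to subtrees, $+\alpha$ is again extended inductive. Hence the invariant is preserved.

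For (b) and termination, let $m$ count the occurrences of $@$-nodes which are the farthest (deepest) node of the $P_3$-part of \emph{some} $\epsilon$-critical branch occurring in the current system; call these the $P_3$-\emph{boundary} nodes (a nonempty $P_3$-part must, by the definition of extended good branch, terminate in such an $@$-node). I claim each rule application strictly decreases $m$. Because the first child of an $@$-node is an unlabelled nominal, every $\epsilon$-critical branch through the occurrence $@_\nomj\alpha$ must descend into its \emph{second} argument $\alpha$; thus, after the rule, this occurrence is erased from the context (replaced by a constant, which kills exactly those branches) and is absent inside the spawned $\alpha$. Consequently the chosen boundary node disappears, while the boundary nodes lying inside $\alpha$ were already boundary nodes beforehand and are merely relocated to the spawned inequality, so no fresh boundary node is created: $m$ strictly decreases. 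As $m$ is a non-negative integer, Stage 2.1 halts, and on halting $m=0$, so no $\beta$ has an $\epsilon$-critical branch with a nonempty $P_3$-part; every $\epsilon$-critical branch is then a good branch, i.e.\ each $\beta$ in the final system is $(\Omega,\epsilon)$-inductive, as required.

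The delicate point, and the main obstacle, is precisely the bookkeeping around nested $@$-nodes within a single $P_3$-part: cutting at the chosen farthest $@$-node need \emph{not} render the spawned $\alpha$ inductive in one step, because $\alpha$ may still carry deeper $@$-nodes, hence its own $P_3$-parts. The crux is therefore not to claim that each single application lands inside the inductive fragment, but to verify that the process is genuinely $m$-decreasing and that exhaustive application removes every $P_3$-boundary node. This rests on the structural observation that critical branches descend only through the (unsigned-first-child) second argument of an $@$, so that replacing $@_\nomj\alpha$ by a constant removes that boundary node once and for all and cannot manufacture new boundary nodes; carrying this out carefully, in tandem with the sign discipline that fixes the correct orientation ($\nomk\leq\beta$ versus $\beta\leq\neg\nomk$) of each spawned inequality across all four rules, is where the real work lies.
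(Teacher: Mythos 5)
Your proof is correct and follows essentially the same route as the paper's: analyse the positive decomposition rule (a) in detail, dispose of the other three by sign symmetry, observe that $+\theta(\top)$ and $+\theta(\bot)$ remain $(\Omega,\epsilon)$-extended inductive with one fewer critical branch carrying a $P_3$-part while the spawned $\nomj\leq\alpha$ drops the processed $@$-node, and conclude by exhaustive application. You are in fact slightly more careful than the paper at one point: the paper asserts that the spawned $+\alpha$ is already $(\Omega,\epsilon)$-inductive after a single cut, whereas you correctly note that $\alpha$ may still contain deeper $P_3$-boundary $@$-nodes on \emph{other} critical branches, and you close this gap with an explicit well-founded measure $m$ that the paper leaves implicit in its ``by repeating the rules above'' step.
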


\begin{proof}
Suppose we have an inequality $\nomi\leq\theta(@_{\nomj}\alpha)$ where $+@_{\nomj}\alpha\prec+\theta$, $+\theta$ is $(\Omega,\epsilon)$-extended inductive and $@_{\nomj}$ is the farthest node in the $P_3$-part of a $\epsilon$-critical branch. Then by applying the decomposition rule in Stage 2.1, we get inequalities of the form $\nomi\leq\theta(\top)$, $\nomi\leq\theta(\bot)$ and $\nomj\leq\alpha$. For $\nomi\leq\theta(\top)$ and $\nomi\leq\theta(\bot)$, $+\theta(\top)$ and $+\theta(\bot)$ are still $(\Omega,\epsilon)$-extended inductive, but have one less $\epsilon$-critical branch with $P_3$-part, and in $\nomj\leq\alpha$, $+\alpha$ is $(\Omega,\epsilon)$-inductive, since it has no $P_3$-part in $\epsilon$-critical branches. 

For the other three cases where a decomposition rule can be applied, we get inequalities with either the $\theta(\top)$ and $\theta(\bot)$ part having one less $\epsilon$-critical branch with $P_3$-part, or the $\alpha$ part has no $P_3$-part in $\epsilon$-critical branches, and the two $\theta$ formulas with appropriate signs are still $(\Omega,\epsilon)$-extended inductive and the $\alpha$ formula is $(\Omega,\epsilon)$-inductive. 

By repeating the rules above, we get inequalities with no $P_3$-part on any $\epsilon$-critical branch, therefore all the $\epsilon$-critical branches in all formulas involved are with at most $P_1$ and $P_2$-part, therefore with appropriate signs, they are $(\Omega,\epsilon)$-inductive.
\end{proof}

\begin{definition}[$(\Omega,\epsilon)$-inner inductive signed generation tree]
Given an order-type $\epsilon$ and a dependence order $<_{\Omega}$, $*\in\{-,+\}$, the signed generation tree $*\phi$ of the formula $\phi(p_1,\ldots, p_n)$ is \emph{$(\Omega,\epsilon)$-inner inductive} if it is $(\Omega,\epsilon)$-extended inductive and the $P_3,P_2$-part on an $\epsilon$-critical branch are always empty, i.e.\ its $\epsilon$-critical branches have $P_1$-nodes only.
\end{definition}

\begin{lemma}\label{dLemma:Substage:2:2}
Given inequalities $\nomk\leq\beta$ and $\beta\leq\neg\nomk$ obtained from Stage 2.1 where $+\phi_i$ and $-\psi_i$ are $(\Omega,\epsilon)$-inductive, by applying the rules in Stage 2.2 exhaustively, for each quasi-inequality $\mathsf{S}\ \Rightarrow\ \nomi_0\leq\neg\nomi_1$ obtained, the inequalities that we get in $\mathsf{S}$ are in one of the following forms:
\begin{enumerate}
    \item pure inequalities without propositional variables;
    \item inequalities of the form $\nomi\leq\alpha$ where $+\alpha$ is $(\Omega,\epsilon)$-inner inductive;
    \item inequalities of the form $\beta\leq\neg\nomi$ where $-\beta$ is $(\Omega,\epsilon)$-inner inductive.
\end{enumerate}
\end{lemma}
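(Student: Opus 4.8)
The plan is to prove Lemma \ref{dLemma:Substage:2:2} by structural induction on the length of the $P_2$-part of the $\epsilon$-critical branches, showing that each application of a Stage 2.2 rule either shortens the $P_2$-part of a critical branch by one outer node while preserving the inductive shape of the resulting formulas, or produces a pure inequality. First I would observe the key invariant, guaranteed by the hypothesis and by Lemma \ref{dLemma:Substage:2:1}: every inequality entering Stage 2.2 is of the form $\nomk \leq \beta$ with $+\beta$ being $(\Omega,\epsilon)$-inductive, or $\beta \leq \neg\nomk$ with $-\beta$ being $(\Omega,\epsilon)$-inductive. Since the $P_3$-parts have already been eliminated, every $\epsilon$-critical branch now consists only of a $P_1$-part (inner nodes) followed by a $P_2$-part (outer nodes) closer to the root. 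The goal of Stage 2.2 is to peel off the outer nodes one at a time from the root, so that what remains is $(\Omega,\epsilon)$-inner inductive, i.e. has only $P_1$-nodes on its critical branches.

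Next I would proceed by a case analysis on the outermost connective of the signed generation tree at the root of the critical branch, matching each case to the appropriate Stage 2.2 rule. For a $+$-signed $\lor$ at the root of $\nomk \leq \beta_1 \lor \beta_2$ (an outer $-\lor$ would appear on the $-$ side), the splitting rule replaces the single inequality by two inequalities $\nomk \leq \beta_1$ and $\nomk \leq \beta_2$, and since $+(\beta_1 \lor \beta_2)$ was $(\Omega,\epsilon)$-inductive each $+\beta_j$ inherits $(\Omega,\epsilon)$-inductivity with a strictly shorter $P_2$-part. For outer $\Diamond$, $@$, and $\neg$ nodes, the approximation and residuation rules apply: for $\nomk \leq \Diamond\alpha$ we introduce a fresh nominal $\nomj$ and obtain $\nomj \leq \alpha$ together with the pure inequality $\nomk \leq \Diamond\nomj$, where $+\alpha$ is again $(\Omega,\epsilon)$-inductive with one fewer outer node; for $\nomk \leq @_{\nomj}\alpha$ the approximation rule yields $\nomj \leq \alpha$; and for the negation cases the residuation rules flip $\nomk \leq \neg\alpha$ to $\alpha \leq \neg\nomk$, preserving the inductive shape while swapping the sign. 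The crucial structural point throughout is that the side formula $\gamma$ in any binary outer SRR-node (such as the $\to$ approximation rule $\alpha \to \beta \leq \neg\nomi$) is $\epsilon^\partial$-uniform by clause (2a) of Definition \ref{daInducive:Ineq:Def}, so it contributes no $\epsilon$-critical branch and the fresh-nominal approximation correctly isolates the one branch carrying a critical variable.

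I would then argue termination and completeness of the case analysis. Termination follows because each non-pure rule strictly decreases the total number of outer ($P_2$) nodes on critical branches across the system, a well-founded measure; the splitting rules multiply the number of systems but do not increase this measure on any single branch. Completeness of the cases follows from Table \ref{daJoin:and:Meet:Friendly:Table}: every outer node is one of $+\land, +\Box, +\neg, +@$ (SRA outer) or the SRR outer connectives, and for each there is exactly one matching Stage 2.2 rule; the side condition that fresh nominals do not occur in the system is respected since the rules explicitly introduce new nominals. When no outer nodes remain, every critical branch consists of $P_1$-nodes only, which is precisely the definition of $(\Omega,\epsilon)$-inner inductive, giving alternatives (2) and (3); inequalities all of whose propositional variables have been consumed into pure form give alternative (1).

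The main obstacle I expect is bookkeeping the interaction between the two sides of a binary SRR-node and verifying that the fresh-nominal approximation for connectives like $\to$ and $\Diamond$ genuinely preserves $(\Omega,\epsilon)$-inductivity rather than merely $(\Omega,\epsilon)$-extended inductivity. Specifically, one must check that after pulling the critical branch through an outer $\Diamond$ or $@$, the residual formula on the non-critical side remains $\epsilon^\partial$-uniform and that the dependence order $<_\Omega$ is still respected by the variables appearing there; this is where the careful use of clauses (2a) and (2b) of Definition \ref{daInducive:Ineq:Def} is essential. Once this invariant is established for each rule, the induction closes and the lemma follows.
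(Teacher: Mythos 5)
Your proposal is correct and takes essentially the same route as the paper's own proof: an induction on the number of outer ($P_2$) nodes on $\epsilon$-critical branches, checking that each Stage 2.2 rule either produces a pure inequality or strips one root connective while keeping a nominal on one side and an $(\Omega,\epsilon)$-inductive subformula on the other, so that exhaustive application leaves only pure or inner-inductive inequalities. Your version is just more detailed (explicit case analysis, termination measure, and the $\epsilon^{\partial}$-uniformity of side formulas via clause (2a)), and the only slip is the aside classifying $+\Box$ as an outer SRA node, whereas in Table \ref{daJoin:and:Meet:Friendly:Table} it is inner; this does not affect the argument.
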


\begin{proof}
Indeed, the rules in Stage 2.2 deal with $P_2$ nodes in the signed generation trees $+\beta$ and $-\beta$. For each rule, without loss of generality assume we start with an inequality of the form $\nomi\leq\alpha$, then by applying the rules in Stage 2.2, the new inequalities we get are either a pure inequality without propositional variables, or 
an inequality where the left-hand side (resp.\ right-hand side) is $\nomi$ (resp.\ $\neg\nomi$), and the other side is a formula $\alpha'$ which is a subformula of $\alpha$, such that $\alpha'$ has one root connective less than $\alpha$. Indeed, if $\alpha'$ is on the left-hand side (resp.\ right-hand side) then $-\alpha'$ ($+\alpha'$) is $(\Omega,\epsilon)$-inductive.

By applying the rules in Stage 2.2 exhaustively, we can eliminate all the $P_2$ connectives in the $\epsilon$-critical branches, so for non-pure inequalities, they become of form 2 or form 3.
\end{proof}

Now comes the difference between $\mathsf{ALBA}^{@}$ and $\mathsf{ALBA}^{@}_{\mathsf{Restricted}}$. We first deal with $\mathsf{ALBA}^{@}$.

\begin{lemma}\label{dLemma:Substage:2:3}
Assume we have an inequality $\nomi\leq\alpha$ or $\beta\leq\neg\nomi$ where $+\alpha$ and $-\beta$ are $(\Omega,\epsilon)$-inner inductive, by applying the rules in Stage 2.3, for each quasi-inequality $\mathsf{S}\ \Rightarrow\ \nomi_0\leq\neg\nomi_1$ obtained, the inequalities that we get in $\mathsf{S}$ are in one of the following forms:

\begin{enumerate}
\item $\alpha\leq p$, where $\epsilon(p)=1$, $-\alpha$ does not contain $p$ and is $\epsilon^{\partial}$-uniform;
\item $p\leq\beta$, where $\epsilon(p)=\partial$, $+\beta$ does not contain $p$ and is $\epsilon^{\partial}$-uniform;
\item $\gamma\leq\delta$, where $-\gamma,+\delta$ are $\epsilon^{\partial}$-uniform.
\end{enumerate}
\end{lemma}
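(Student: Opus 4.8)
The plan is to analyze how each rule in Stage 2.3 acts on an inequality whose non-pure side is $(\Omega,\epsilon)$-inner inductive, and to show that exhaustive application drives every such inequality into one of the three stated normal forms, which are precisely the forms required to trigger an Ackermann rule in Stage 2.4. The key structural fact I would use is that a $(\Omega,\epsilon)$-inner inductive signed generation tree has only $P_1$-nodes on its $\epsilon$-critical branches, and $P_1$ consists solely of inner (SRR) nodes, i.e.\ (reading Table \ref{daJoin:and:Meet:Friendly:Table}) nodes of the form $+\vee$, $+\to$, $-\wedge$ on the $\epsilon$-critical branch, together with the SRA nodes and the residuation targets for $@$, $\blacksquare$, $\Diamondblack$. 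Thus the Stage 2.3 splitting and residuation rules are exactly the adjunction/residuation rules matching these inner connectives.

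First I would set up an induction on the size of the non-pure side $\alpha$ (resp.\ $\beta$) of the inequality, arguing that every applicable Stage 2.3 rule strictly decreases the number of inner connectives lying on the $\epsilon$-critical branch while preserving inner inductivity of the newly exposed subtree. The inductive step is a case analysis on the root connective of the critical branch: for the splitting rules I would note that $\alpha\leq\beta\wedge\gamma$ splits into $\alpha\leq\beta$ and $\alpha\leq\gamma$, each of which still has $\nomi$ on the appropriate side and an inner-inductive partner of smaller size; for the residuation rules for $\neg$, $\blacksquare/\Diamond$, $\Diamondblack/\Box$, and $\to$, I would check that residuating moves the non-critical side-formula $\gamma$ across the inequality, and that by condition (2) in Definition \ref{daInducive:Ineq:Def} such a $\gamma$ satisfies $\epsilon^\partial(\gamma)\prec\ast\phi$, so after residuation it lands as an $\epsilon^\partial$-uniform formula not containing the critical variable. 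This is what produces the side-conditions on $\eta_j,\iota_j$ (uniformity in $p$) and ultimately form 3. The two system-splitting residuation rules for $@$ require separate treatment: I would verify that each branch produces either a pure inequality (from the $\alpha\leq\bot$ or $\top\leq\beta$ horn) or an inequality $\nomj\leq\beta$ / $\alpha\leq\neg\nomj$ of the same shape, so the invariant is maintained.

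The termination of the exhaustive application follows since each rule strictly reduces the inner-connective count on the critical branch, and once that count reaches zero the only remaining symbol above the critical leaf is the propositional variable itself. At that point the inequality has one of its sides equal to a single $\epsilon$-critical variable $p$ (with $\epsilon(p)=1$ giving $\alpha\leq p$, and $\epsilon(p)=\partial$ giving $p\leq\beta$) and the other side $\epsilon^\partial$-uniform and $p$-free by the dependence-order condition $p_k<_\Omega p_i$, which guarantees $p$ itself does not reappear in the residuated side-formulas; these are forms 1 and 2. Inequalities that never carried an $\epsilon$-critical variable, or the residual side-formulas alone, end up $\epsilon^\partial$-uniform on both sides, giving form 3.

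The main obstacle I anticipate is bookkeeping the interaction between the \emph{uniformity} and \emph{freeness} conditions: I must confirm that when a residuation rule moves a subformula $\gamma$ to the other side, the order-type constraint $\epsilon^\partial(\gamma)\prec\ast\phi$ is exactly preserved under the sign changes induced by $\neg$ and by the first argument of $\to$, and that the $<_\Omega$-minimality of the variables in $\gamma$ relative to the critical $p_i$ is what prevents $p_i$ from occurring in $\gamma$ (so forms 1 and 2 genuinely have a $p$-free partner). Getting these sign and dependence-order invariants to line up across every residuation rule is the delicate part; the splitting rules and the termination argument are routine by comparison.
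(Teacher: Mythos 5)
Your proposal is correct and follows essentially the same route as the paper: an invariant-maintenance argument showing that each Stage 2.3 splitting/residuation rule strips one root connective from the inner-inductive side while keeping the accumulated other side $\epsilon^{\partial}$-uniform and free of the critical variable, terminating when the critical variable is exposed (forms 1 and 2) or no critical branch remains (form 3). You are in fact slightly more explicit than the paper on why the residuated side-formulas are $p$-free, namely via the irreflexivity of $<_{\Omega}$ in condition (2) of Definition \ref{daInducive:Ineq:Def}, where the paper simply asserts that the uniform side ``does not contain the relevant variables.''
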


\begin{proof}
For each of the inequality at the beginning of this stage, it is of the form $\nomi\leq\alpha$ or $\beta\leq\neg\nomi$, where 
\begin{itemize}
\item $+\alpha$ and $-\beta$ are $(\Omega,\epsilon)$-inner inductive;
\item $-\nomi$ and $+\neg\nomi$ are $\epsilon^{\partial}$-uniform;
\item for each $\epsilon$-critical occurrence $p$ in $+\alpha$ (resp.\ $-\beta$), $\nomi$ (resp.\ $\neg\nomi$) does not contain $p$.
\end{itemize}

By applying the splitting rules and the residuation rules in Stage 2.3 to an inequality $\gamma_1\leq\theta_1$ or $\theta_2\leq\gamma_2$ where 

\begin{itemize}
\item for each $\epsilon$-critical occurrence $p$ in $+\theta_1$ (resp.\ $-\theta_2$), $\gamma_1$ (resp.\ $\gamma_2$) does not contain $p$;
\item $-\gamma_1$ (resp.\ $+\gamma_2$) is $\epsilon^{\partial}$-uniform;
\item $+\theta_1$ (resp.\ $-\theta_2$) is $(\Omega,\epsilon)$-inner inductive;
\end{itemize}
it is easy to check that the $\epsilon^{\partial}$-uniform part will remain $\epsilon^{\partial}$-uniform and does not contain the relevant variables, and the other side is a formula $\theta'_i$ which is a subformula of $\theta_i$, such that $\theta'_i$ with appropriate sign is still $(\Omega,\epsilon)$-inner inductive but has one root connective less than $\theta_i$.

By applying these rules exhaustively, the $\epsilon^{\partial}$-uniform side will remain $\epsilon^{\partial}$-uniform and does not contain the relevant variables, the other side is either a propositional variable $p$ which, with appropriate sign, is $(\Omega,\epsilon)$-inner inductive, or is a  $\epsilon^{\partial}$-uniform formula (with appropriate sign, which is also $(\Omega,\epsilon)$-inner inductive since it contains no $\epsilon$-critical branch). Therefore, the inequality is of one of the three forms indicated.
\end{proof}

\begin{lemma}\label{dLemma:Substage:2:4}
Assume we have inequalities of the form as described in Lemma \ref{dLemma:Substage:2:3}, the Ackermann rules are applicable and therefore all propositional variables can be eliminated.
\end{lemma}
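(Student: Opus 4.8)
The plan is to verify that after Stage 2.3, the inequalities match exactly the shape required by the Ackermann rules in Stage 2.4, so that eliminating each propositional variable becomes a mechanical application of one of the two Ackermann rules. Recall from Lemma \ref{dLemma:Substage:2:3} that every non-pure inequality in the system $\mathsf{S}$ is in one of three forms: $\alpha \leq p$ with $\epsilon(p)=1$ and $-\alpha$ being $\epsilon^{\partial}$-uniform and $p$-free; $p \leq \beta$ with $\epsilon(p)=\partial$ and $+\beta$ being $\epsilon^{\partial}$-uniform and $p$-free; or $\gamma \leq \delta$ with both $-\gamma$ and $+\delta$ being $\epsilon^{\partial}$-uniform. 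The key observation is that these three forms collectively package each variable $p$ into a bundle of minimal- (or maximal-) valuation inequalities together with the remaining ``side condition'' inequalities, precisely as the premise of the Ackermann rule demands.

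First I would fix an arbitrary $\epsilon$-minimal propositional variable $p$ with respect to the dependence order $<_{\Omega}$, and argue that all inequalities mentioning $p$ can be sorted into the Ackermann pattern. Here one uses that the dependence order $<_{\Omega}$ is a strict partial order (guaranteed by the definition of extended inductive), so an $<_{\Omega}$-minimal variable always exists; I would process variables in $<_{\Omega}$-ascending order. For a variable $p$ of order-type $1$, the inequalities of form $\alpha \leq p$ (with $p \notin \alpha$) are exactly the ``$\theta_i \leq p$'' premises of the right-handed Ackermann rule, and every other inequality in which $p$ occurs must, by Lemma \ref{dLemma:Substage:2:3}, be of form $\gamma \leq \delta$ where $+\delta$ and $-\gamma$ are $\epsilon^{\partial}$-uniform; since $\epsilon(p)=1$, the condition $\epsilon^{\partial}$-uniform forces $p$ to occur only negatively in $\gamma$ and positively in $\delta$, which is precisely the requirement ``each $\eta_j$ positive and each $\iota_j$ negative in $p$'' after identifying $\eta_j \leq \iota_j$ with the form $\gamma \leq \delta$. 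The order-type $\partial$ case is dual, using the left-handed Ackermann rule. Thus the syntactic side conditions of the Ackermann rules are met, and substituting $\theta = \theta_1 \vee \cdots \vee \theta_n$ (resp.\ $\theta_1 \wedge \cdots \wedge \theta_n$, with the empty-disjunction/conjunction conventions covering the case where $p$ has no minimal-valuation inequality) eliminates $p$.

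The remaining point to check is that the substitution does not destroy the shape of the system for the variables not yet eliminated: after replacing $p$ by $\theta$, each surviving inequality is still $\epsilon^{\partial}$-uniform in the other variables, because $\theta$ is built from the $p$-free, $\epsilon^{\partial}$-uniform left-hand sides $\alpha$ (which themselves contain only $<_{\Omega}$-smaller critical variables, already eliminated, together with $\epsilon^{\partial}$-uniform material). Here I would invoke clause 2(b) of Definition \ref{daInducive:Ineq:Def}: every variable occurring in the ``$\gamma$-side'' of an SRR-node is strictly $<_{\Omega}$-below the critical variable, so processing in $<_{\Omega}$-ascending order guarantees that the substituted $\theta$ contains no not-yet-processed critical variable, preserving the invariant. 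Iterating over all variables eliminates them all, and the system reduces to pure quasi-inequalities.

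The main obstacle I anticipate is the bookkeeping that confirms the substitution $\theta/p$ preserves $\epsilon^{\partial}$-uniformity and $p$-freeness of the surviving inequalities across the whole iteration, rather than any single algebraic step; in particular one must be careful that the minimal valuation for a variable $p$ is built only from inequalities whose non-$p$ content is already $\epsilon^{\partial}$-uniform, and that the dependence order genuinely licenses the induction. Once the invariant is set up correctly, each individual elimination is a direct instance of one of the two stated Ackermann rules, so the bulk of the work is in establishing the inductive invariant rather than in the Ackermann step itself.
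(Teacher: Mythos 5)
Your proposal is correct in substance and follows the same route the paper takes: the paper disposes of this lemma with the single line ``Immediate observation from the requirements of the Ackermann rules,'' and what you have written is essentially that observation carried out explicitly, together with the $<_\Omega$-ordering bookkeeping that the paper leaves implicit. One local error to fix: when you translate $\epsilon^{\partial}$-uniformity of $-\gamma$ and $+\delta$ into polarities for a variable $p$ with $\epsilon(p)=1$, the signs come out the other way around from what you wrote. Since $\epsilon^{\partial}(p)=\partial$, the occurrences of $p$ carry the sign $-$ in the signed trees $-\gamma$ and $+\delta$; a $-$-signed leaf of $-\gamma$ is a \emph{positive} occurrence in $\gamma$, and a $-$-signed leaf of $+\delta$ is a \emph{negative} occurrence in $\delta$. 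So $p$ occurs positively in $\gamma=\eta_j$ and negatively in $\delta=\iota_j$, which is exactly the side condition of the right-handed Ackermann rule, and is the opposite of the ``negatively in $\gamma$ and positively in $\delta$'' you state just before drawing the (correct) conclusion. The rest --- in particular your point that clause 2(b) of Definition \ref{daInducive:Ineq:Def} is what guarantees the substituted $\theta$ is already pure when variables are processed in $<_\Omega$-ascending order, so that the surviving inequalities keep the shape required for the next elimination --- is sound, and is more than the paper itself records.
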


\begin{proof}
Immediate observation from the requirements of the Ackermann rules.
\end{proof}

\begin{proof}[Proof of Theorem \ref{dThm:Success} for $\mathsf{ALBA}^{@}$]
Assume we have an $(\Omega,\epsilon)$-extended inductive formula $\phi\to\psi$ as input. By Stage 1, we get the quasi-inequality $\nomi_0\leq\phi\ \&\ \psi\leq\neg\nomi_1\ \Rightarrow\ \nomi_0\leq\neg\nomi_1$. By Lemma \ref{dLemma:Substage:2:1}, \ref{dLemma:Substage:2:2}, \ref{dLemma:Substage:2:3}, we get quasi-inequalities as described there. Finally by Lemma \ref{dLemma:Substage:2:4}, the quasi-inequalities are in the right shape to apply the Ackermann rules, and thus we can eliminate all the propositional variables and the algorithm succeeds on the input.
\end{proof}

For $\mathsf{ALBA}^{@}_{\mathsf{Restricted}}$, in Lemma \ref{dLemma:Substage:2:2}, the critical branches in $(\Omega,\epsilon)$-inner inductive formulas are without $P_1$-part, so they are either $\epsilon^{\partial}$-uniform, or they are already propositional variables, therefore the result for Lemma \ref{dLemma:Substage:2:3} is automatically satisfied, so by Lemma \ref{dLemma:Substage:2:4}, $\mathsf{ALBA}^{@}_{\mathsf{Restricted}}$ succeeds on all extended skeletal formulas.

\section{Completeness results}\label{dSec:Completeness}
In this section, we will prove that given any extended skeletal formula $\phi\to\psi$, the logic $\mathbf{K}_{\mathcal{H}(@)}+(\phi\to\psi)$ is sound and strongly complete with respect to the class of frames defined by $\phi\to\psi$. The strategy is the same as \cite{Zh22b} and the presentation is similar.

Our proof strategy is as follows:

\begin{itemize}
\item We translate of each system in $\textsf{Pure}(\phi\to\psi)$ into $\mathcal{L}(@)$-formulas, which results in a set $\Pi$ of $\mathcal{L}(@)$-formulas, and we prove that $\phi\to\psi$ and $\Pi$ define the same class of frames.
\item We then prove that each $\pi\in\Pi$ is provable in $\mathbf{K}_{\mathcal{H}(@)}+(\phi\to\psi)$. Therefore, by the Theorem \ref{dCompleteness:Pure}, we get the soundness and strong completeness of $\mathbf{K}_{\mathcal{H}(@)}+(\phi\to\psi)$.
\end{itemize}

\subsection{The translation}

As one can easily observe, in $\mathsf{ALBA}^{@}_{\mathsf{Restricted}}$, in the systems obtained in Stage 2, for each inequality in them, either the left-hand side is $\nomi$, or the right-hand side is $\neg\nomi$. Indeed, the inequality $\nomi\leq\gamma$ is equivalent to the $\mathcal{L}(@)$-formula $@_\nomi \gamma$ , and the inequality $\gamma\leq\neg\nomi$ is equivalent to the $\mathcal{L}(@)$-formula $\neg @_\nomi \gamma$. Therefore, the systems obtained in Stage 2 are equivalent to $\mathcal{L}(@)$-formulas.

\begin{definition}[Translation into $\mathcal{L}(@)$-formulas]
We define the translation of the inequalities of the form $\nomi\leq\gamma$, $\gamma\leq\neg\nomi$ into $\mathcal{L}(@)$-formulas as follows:
\begin{itemize}
\item $\mathsf{Tr}(\nomi\leq\gamma):=@_\nomi \gamma$;
\item $\mathsf{Tr}(\gamma\leq\neg\nomi):=\neg @_\nomi \gamma$.
\end{itemize}
When an inequality is of both of the forms above, we can take any of the two since they are equivalent.

Given a quasi-inequality $\mathsf{Quasi}$ of the form $\mathsf{Ineq}_1\ \&\ \ldots\ \&\ \mathsf{Ineq}_n\ \Rightarrow\ \nomi\leq\neg\nomj$ where each of $\mathsf{Ineq}_1, \ldots, \mathsf{Ineq}_n$ is of the form $\nomi\leq\gamma$ or $\gamma\leq\neg\nomi$, define  
$$\mathsf{Tr}(\mathsf{Quasi}):=\mathsf{Tr}(\mathsf{Ineq}_1)\land\ldots\land\mathsf{Tr}(\mathsf{Ineq}_n)\to\neg @_{\nomi}\nomj$$

Given a set $\mathsf{QuasiSet}$ of quasi-inequalities of the form above, define $$\mathsf{Tr}(\mathsf{QuasiSet}):=\bigwedge_{\mathsf{Quasi}\in\mathsf{QuasiSet}}\mathsf{Tr}(\mathsf{Quasi}).$$
\end{definition}

\begin{proposition}\label{dProp:Translation}
For each inequality $\mathsf{Ineq}$ of the form $\nomi\leq\gamma$ or $\gamma\leq\neg\nomi$, each quasi-inequality $\mathsf{Quasi}$ of the form described above, each set $\mathsf{QuasiSet}$ of quasi-inequalities of the form above, we have that for any model $\mathbb{M}$, 
$$\mathbb{M}\Vdash\mathsf{Ineq}\mbox{ iff }\mathbb{M}\Vdash\mathsf{Tr}(\mathsf{Ineq})$$
$$\mathbb{M}\Vdash\mathsf{Quasi}\mbox{ iff }\mathbb{M}\Vdash\mathsf{Tr}(\mathsf{Quasi})$$
$$\mathbb{M}\Vdash\mathsf{QuasiSet}\mbox{ iff }\mathbb{M}\Vdash\mathsf{Tr}(\mathsf{QuasiSet}).$$
\end{proposition}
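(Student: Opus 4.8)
The plan is to establish the three equivalences in the order listed, since each relies on the one before it. The whole argument rests on a single structural observation: every formula that appears as a $\mathsf{Tr}$-image, namely $@_\nomk\gamma$ or $\neg@_\nomk\gamma$, is \emph{world-independent}, i.e.\ globally true or globally false on $\mathbb{M}$. This is the same fact already used in the soundness proof of Stage 2.1, where $@_{\nomj}\alpha$ was observed to be either globally true or globally false, and it is immediate from the clause that $\mathbb{M},w\Vdash@_\nomk\gamma$ iff $\mathbb{M},V(\nomk)\Vdash\gamma$, whose right-hand side does not mention $w$.

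First I would handle the inequality case according to its two subforms. If $\mathsf{Ineq}$ is $\nomi\leq\gamma$, then $\mathbb{M}\Vdash\nomi\leq\gamma$ says that $\mathbb{M},w\Vdash\gamma$ for the unique $w$ with $\{w\}=V(\nomi)$, i.e.\ $\mathbb{M},V(\nomi)\Vdash\gamma$; and $\mathbb{M}\Vdash@_\nomi\gamma$ unfolds, via the $@$-clause, to exactly $\mathbb{M},V(\nomi)\Vdash\gamma$. If $\mathsf{Ineq}$ is $\gamma\leq\neg\nomi$, then $\mathbb{M}\Vdash\gamma\leq\neg\nomi$ says that no world satisfying $\gamma$ is equal to $V(\nomi)$, i.e.\ $\mathbb{M},V(\nomi)\nVdash\gamma$, which is precisely $\mathbb{M}\Vdash\neg@_\nomi\gamma$. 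So each subcase reduces to a one-line check against the semantics of nominals and $@$.

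The hard part will be the quasi-inequality case, because there the meta-level conjunction and implication ($\&$ and $\Rightarrow$) between global validities must be internalised as the object-level $\land$ and $\to$ inside a single modal formula. Unfolding the definition of quasi-inequality validity and applying the inequality case to each antecedent conjunct and to the conclusion $\nomi\leq\neg\nomj$, I get that $\mathbb{M}\Vdash\mathsf{Quasi}$ iff the following holds: if $\mathbb{M}\Vdash\mathsf{Tr}(\mathsf{Ineq}_k)$ for all $k$, then $\mathbb{M}\Vdash\neg@_\nomj\nomi$. Here I use that $@_\nomi\nomj$ and $@_\nomj\nomi$ are equivalent, both expressing $V(\nomi)=V(\nomj)$, so this conclusion agrees with the $\neg@_\nomi\nomj$ appearing in the definition of $\mathsf{Tr}(\mathsf{Quasi})$. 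On the other side, $\mathbb{M}\Vdash\mathsf{Tr}(\mathsf{Quasi})$ means that the implication $\bigwedge_k\mathsf{Tr}(\mathsf{Ineq}_k)\to\neg@_\nomi\nomj$ is true at every $w$. Since each conjunct and the consequent are world-independent, the truth of this implication does not vary with $w$, so ``true at every $w$'' collapses to ``true at some, equivalently any, $w$'', which is exactly the meta-level implication between global validities obtained above. This collapse is the only nontrivial move, and it is precisely where world-independence is indispensable.

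Finally the set case is routine: $\mathsf{QuasiSet}$ is the finite set of pure quasi-inequalities output by the algorithm, so $\mathbb{M}\Vdash\mathsf{Tr}(\mathsf{QuasiSet})$ means that every conjunct $\mathsf{Tr}(\mathsf{Quasi})$ is globally true, which by the semantics of $\land$ and the previous case is equivalent to $\mathbb{M}\Vdash\mathsf{Quasi}$ for every $\mathsf{Quasi}\in\mathsf{QuasiSet}$, that is, $\mathbb{M}\Vdash\mathsf{QuasiSet}$.
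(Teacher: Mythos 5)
Your proof is correct, and the paper in fact states this proposition without any proof, treating it as an immediate consequence of the semantics of nominals and $@$; your argument is exactly the routine verification being elided. You correctly isolate the one point that actually needs saying, namely that $@_{\nomi}\gamma$ is world-independent, so that global truth of the object-level implication in $\mathsf{Tr}(\mathsf{Quasi})$ collapses to the meta-level implication between global validities in $\mathsf{Quasi}$.
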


\subsection{Provability of the translations}

\begin{lemma}[Lemma 7.5 in \cite{Zh22b}]
For the quasi-inequality $\mathsf{Quasi}:=\nomi_0\leq\phi\ \&\ \psi\leq\neg\nomi_1\ \Rightarrow\ \nomi_0\leq\neg\nomi_1$, then we have that $\vdash_{\phi\to\psi}\mathsf{Tr}(\mathsf{Quasi})$.
\end{lemma}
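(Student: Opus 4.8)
The first step is to unfold $\mathsf{Tr}(\mathsf{Quasi})$. The antecedent inequalities $\nomi_0\leq\phi$ and $\psi\leq\neg\nomi_1$ translate to $@_{\nomi_0}\phi$ and $\neg@_{\nomi_1}\psi$, while the consequent $\nomi_0\leq\neg\nomi_1$ contributes $\neg@_{\nomi_0}\nomi_1$, so the goal is
$$\vdash_{\phi\to\psi}@_{\nomi_0}\phi\land\neg@_{\nomi_1}\psi\to\neg@_{\nomi_0}\nomi_1.$$
Since (CT) and (MP) are available, this is classically interderivable with $\vdash_{\phi\to\psi}@_{\nomi_0}\phi\land@_{\nomi_0}\nomi_1\to@_{\nomi_1}\psi$, which is the implication I would actually establish and then contrapose.

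I plan to assemble this implication from two independent pieces. The first is a \emph{transfer} step, the only place the added axiom is used: from $\phi\to\psi$, applying (Nec$_{@}$) gives $\vdash_{\phi\to\psi}@_{\nomi_0}(\phi\to\psi)$, and then (K$_{@}$) with (MP) yields $\vdash_{\phi\to\psi}@_{\nomi_0}\phi\to@_{\nomi_0}\psi$. The second is an \emph{agreement} step, a pure theorem of $\mathbf{K}_{\mathcal{H}(@)}$ and hence of every extension, namely $\vdash@_{\nomi_0}\nomi_1\land@_{\nomi_0}\psi\to@_{\nomi_1}\psi$. To derive it I would instantiate (Intro) via (SB) to get $\vdash\nomi_1\land\psi\to@_{\nomi_1}\psi$, prefix it with $@_{\nomi_0}$ using (Nec$_{@}$), distribute $@_{\nomi_0}$ with (K$_{@}$) to obtain $\vdash@_{\nomi_0}(\nomi_1\land\psi)\to@_{\nomi_0}@_{\nomi_1}\psi$, fold in the standard normality fact $\vdash@_{\nomi_0}\nomi_1\land@_{\nomi_0}\psi\to@_{\nomi_0}(\nomi_1\land\psi)$ (again from (K$_{@}$) and (CT)), and finish with (Agree), i.e.\ $\vdash@_{\nomi_0}@_{\nomi_1}\psi\to@_{\nomi_1}\psi$.

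Chaining the transfer step into the agreement theorem then gives $@_{\nomi_0}\phi\land@_{\nomi_0}\nomi_1\to@_{\nomi_1}\psi$, and one classical contraposition delivers exactly $\mathsf{Tr}(\mathsf{Quasi})$; note that the freshness of $\nomi_0,\nomi_1$ plays no role here, as it is only needed for soundness elsewhere. The single genuinely hybrid insight — and the step I expect to be the crux — is the agreement theorem: that $@_{\nomi_0}\nomi_1$ lets one relocate the truth of $\psi$ from the world named $\nomi_0$ to the world named $\nomi_1$, which rests on pairing (Intro) (to internalise $\psi$ at $\nomi_1$ under the scope of $@_{\nomi_0}$) with (Agree) (to collapse the nested $@_{\nomi_0}@_{\nomi_1}$). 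Everything else is routine normal-modal bookkeeping with (Nec$_{@}$), (K$_{@}$) and classical tautologies.
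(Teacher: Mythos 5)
Your proposal is correct. Note that the paper itself gives no proof of this lemma --- it is imported by citation as Lemma 7.5 of \cite{Zh22b} --- so there is nothing in the text to compare against; your derivation (unfold $\mathsf{Tr}$, contrapose to $@_{\nomi_0}\phi\land@_{\nomi_0}\nomi_1\to@_{\nomi_1}\psi$, transfer via (Nec$_{@}$)/(K$_{@}$) applied to the axiom $\phi\to\psi$, and relocate via (Intro) plus (Agree)) is the standard argument, every step is licensed by the listed axioms and rules of $\mathbf{K}_{\mathcal{H}(@)}$, and your observation that freshness of $\nomi_0,\nomi_1$ is irrelevant to provability is also accurate.
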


Now we will prove that for each system $\mathsf{S}\ \Rightarrow\ \nomi_0\leq\neg\nomi_1$ obtained during Stage 2, $\mathsf{Tr}(\mathsf{S}\ \Rightarrow\ \nomi_0\leq\neg\nomi_1)$ is provable.

\begin{lemma}
Given the quasi-inequality $\nomi_0\leq\phi\ \&\ \psi\leq\neg\nomi_1\ \Rightarrow\ \nomi_0\leq\neg\nomi_1$ obtained in Stage 1, for each system $\mathsf{S}\ \Rightarrow\ \nomi_0\leq\neg\nomi_1$ obtained during Stage 2, $\vdash_{\phi\to\psi}\mathsf{Tr}(\mathsf{S}\ \Rightarrow\ \nomi_0\leq\neg\nomi_1)$.
\end{lemma}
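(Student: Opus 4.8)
The plan is to prove the statement by induction on the number of rule applications in Stage 2 of $\mathsf{ALBA}^{@}_{\mathsf{Restricted}}$. The base case is exactly the previous lemma: the initial quasi-inequality $\nomi_0\leq\phi\ \&\ \psi\leq\neg\nomi_1\ \Rightarrow\ \nomi_0\leq\neg\nomi_1$ from Stage 1 has a provable translation. For the inductive step, I would fix a system $\mathsf{S}\ \Rightarrow\ \nomi_0\leq\neg\nomi_1$ whose translation is already provable in $\mathbf{K}_{\mathcal{H}(@)}+(\phi\to\psi)$, and show that whichever rule is applied next, the translations of the resulting system(s) are also provable. Since $\mathsf{ALBA}^{@}_{\mathsf{Restricted}}$ omits Stage 2.3, the rules to check are those of Stage 2.1, Stage 2.2, and the Ackermann rules of Stage 2.4. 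Throughout, I would use Proposition \ref{dProp:Translation} freely to move between inequalities/quasi-inequalities and their $\mathcal{L}(@)$-translations, and the recurring strategy is: a single rule application corresponds, under $\mathsf{Tr}$, to an implication (or biconditional) between hybrid formulas that is itself a theorem of $\mathbf{K}_{\mathcal{H}(@)}$, so provability is preserved by $(\mathrm{MP})$.

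The core of the argument is a rule-by-rule verification. For each rule, the task is to exhibit the provability of the translation of the conclusion given the translation of the premise. The decomposition rules in Stage 2.1 are the key case where I expect to invoke the decomposition equivalence displayed on page \pageref{dpage:decomposition:Equivalence}, namely $\vdash_{\Sigma}@_{\nomi}\theta(@_{\nomj}\alpha)\leftrightarrow@_{\nomi}\theta(\bot)\lor(@_{\nomi}\theta(\top)\land@_{\nomj}\alpha)$; under $\mathsf{Tr}$, splitting $\nomi\leq\theta(@_{\nomj}\alpha)$ into the two new antecedents mirrors exactly this disjunctive decomposition, and the case split on the two disjuncts yields the two output systems. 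For the splitting rules for $\lor$ and $\land$ in Stage 2.2, the translation $\mathsf{Tr}(\nomi\leq\beta\lor\gamma)=@_{\nomi}(\beta\lor\gamma)$ distributes over disjunction via $(\mathrm{K}_@)$ and classical reasoning into $@_{\nomi}\beta\lor@_{\nomi}\gamma$, matching the two-system split. For the approximation and residuation rules that rewrite a single inequality, the corresponding hybrid equivalences (e.g.\ for $@$-residuation, using $(\mathrm{Ref})$, $(\mathrm{Intro})$, $(\mathrm{Agree})$, and $(\mathrm{BG}_@)$, which are precisely the axioms governing the behaviour of nominals under $@$ and $\Diamond$) provide the needed provable implications. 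The soundness propositions already established in Section \ref{dSec:Soundness} serve as a semantic guide to which equivalences must hold, but here I must supply the syntactic proof in $\mathbf{K}_{\mathcal{H}(@)}$ rather than merely the frame-validity.

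The hardest step will be the Ackermann rule in Stage 2.4, since it is not a local rewriting of one inequality but a global substitution $\theta/p$ across the whole system, eliminating a propositional variable. The difficulty is that there is no direct ``one-line'' hybrid equivalence; instead I would follow the treatment in \cite{Zh22b} (on which this section is explicitly modelled) and argue that the translation of the post-Ackermann quasi-inequality is provable by using the minimal-valuation witness $\theta:=\theta_1\lor\ldots\lor\theta_n$ together with the monotonicity properties guaranteed by the shape conditions ($\eta_j$ positive and $\iota_j$ negative in $p$), which in the restricted setting reduce to manipulating formulas of the form $@_{\nomi}\gamma$. Concretely, I would show that under the provability of $\mathsf{Tr}$ of the premise, instantiating $p$ by its minimal valuation and discharging the antecedents $\bigamp_i\theta_i\leq p$ is a provable step, using $(\mathrm{SB})$ to perform the substitution at the level of theorems. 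It is worth remarking that in $\mathsf{ALBA}^{@}_{\mathsf{Restricted}}$ every inequality arising in Stage 2 is of the form $\nomi\leq\gamma$ or $\gamma\leq\neg\nomi$ (as noted in the preceding discussion), so the translations stay inside $\mathcal{L}(@)$ with no tense operators $\blacksquare,\Diamondblack$; this keeps every intermediate formula in the language of the Hilbert system and is exactly what makes the proof-theoretic argument go through. Once all rules are verified, the induction closes, and since $\pi$ (the translation of $\mathsf{Pure}(\phi\to\psi)$, equivalently $\mathsf{FO}(\phi\to\psi)$) is the translation of the final systems, we conclude $\vdash_{\phi\to\psi}\pi$, which combined with Theorem \ref{dCompleteness:Pure} delivers the desired completeness.
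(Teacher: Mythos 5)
Your proposal matches the paper's proof: the same induction on the Stage~2 rule applications, the same use of the decomposition equivalence $@_{\nomi}\theta(@_{\nomj}\alpha)\leftrightarrow@_{\nomi}\theta(\bot)\lor(@_{\nomi}\theta(\top)\land@_{\nomj}\alpha)$ for Stage~2.1, the same distribution of $@_{\nomi}$ over $\lor$ for the two-system splitting rules, and the same deferral to the treatment in \cite{Zh22b} for the remaining approximation, residuation and Ackermann rules. The only difference is that you sketch the Ackermann step in slightly more detail than the paper, which simply cites \cite[Lemma 7.6]{Zh22b}; the approach is otherwise identical.
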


\begin{proof}
First of all, since in each inequality in the system, either the left-hand side is a nominal, or the right-hand side is the negation of a nominal, $\mathsf{S}\ \Rightarrow\ \nomi_0\leq\neg\nomi_1$ can be translated.

We prove by induction on the algorithm steps in Stage 2 that for each system $\mathsf{S}\ \Rightarrow\ \nomi_0\leq\neg\nomi_1$ obtained during Stage 2, $\vdash_{\phi\to\psi}\mathsf{Tr}(\mathsf{S}\ \Rightarrow\ \nomi_0\leq\neg\nomi_1)$ is provable.

\begin{itemize}
\item For the basic step, obviously $\vdash_{\phi\to\psi}\mathsf{Tr}(\nomi_0\leq\phi\ \&\ \psi\leq\neg\nomi_1\ \Rightarrow\ \nomi_0\leq\neg\nomi_1)$.
\item For Stage 2.1, for the rule (a), it suffices to show that from $\vdash_{\phi\to\psi}@_{\nomi}\theta(@_{\nomj}\alpha)\land\gamma\to\delta$ one can get $\vdash_{\phi\to\psi}@_{\nomi}\theta(\bot)\land\gamma\to\delta$ and 
$\vdash_{\phi\to\psi}@_{\nomi}\theta(\top)\land@_{\nomj}\alpha\land\gamma\to\delta$, which follows from $\vdash_{\Sigma}@_{\nomi}\theta(@_{\nomj}\alpha)\leftrightarrow@_{\nomi}\theta(\bot)\lor(@_{\nomi}\theta(\top)\land@_{\nomj}\alpha)$
on page \pageref{dpage:decomposition:Equivalence}.

For (b),(c),(d), the proofs are similar.
\item For the splitting rules involving two systems, we show it for the rule involving $\lor$. It suffices to show that from $\vdash_{\phi\to\psi}@_{\nomi}(\beta\lor\gamma)\land\theta\to\delta$ one can get $\vdash_{\phi\to\psi}@_{\nomi}\beta\land\theta\to\delta$ and 
$\vdash_{\phi\to\psi}@_{\nomi}\gamma\land\theta\to\delta$, which follows from $\vdash_{\phi\to\psi}@_{\nomi}(\beta\lor\gamma)\leftrightarrow(@_{\nomi}\beta\lor@_{\nomi}\gamma)$.

For the rule involving $\land$, the proof is similar.
\item For the splitting rules within a single system, the approximation rules for $\Diamond,\Box,@,\to$, the residuation rules for $\neg$, the proof is similar to \cite[Lemma 7.6]{Zh22b}.
\item In $\mathsf{ALBA}^{@}_{\mathsf{Restricted}}$, there is no Stage 2.3, so this part can be omitted. 
\item For the Ackermann rules, the proof is similar to \cite[Lemma 7.6]{Zh22b}.
\end{itemize}
\end{proof}

\begin{corollary}\label{dCor:Main}
Given an extended skeletal formula $\phi\to\psi$, for each quasi-inequality $\mathsf{Quasi}$ in $\textsf{Pure}(\phi\to\psi)$, we have that $\vdash_{\phi\to\psi}\mathsf{Tr}(\mathsf{Quasi})$, therefore $\vdash_{\phi\to\psi}\mathsf{Tr}(\textsf{Pure}(\phi\to\psi))$.
\end{corollary}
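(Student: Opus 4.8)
The plan is to obtain Corollary \ref{dCor:Main} as an almost immediate consequence of the preceding lemma, which established provability of the translation for each system $\mathsf{S}\ \Rightarrow\ \nomi_0\leq\neg\nomi_1$ arising during Stage 2 of $\mathsf{ALBA}^{@}_{\mathsf{Restricted}}$. The key observation is that the set $\textsf{Pure}(\phi\to\psi)$ is, by definition, precisely the collection of pure quasi-inequalities obtained when Stage 2 (and hence the whole reduction) terminates successfully. Since $\phi\to\psi$ is assumed extended skeletal, the success result (Theorem \ref{dThm:Success}, via the discussion of $\mathsf{ALBA}^{@}_{\mathsf{Restricted}}$ at the end of Section \ref{dSec:Success}) guarantees that the algorithm does terminate with every propositional variable eliminated, so this set is well-defined and every one of its members is a terminal system of the kind covered by the previous lemma.

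First I would invoke the previous lemma to conclude that for each such terminal system $\mathsf{Quasi}\in\textsf{Pure}(\phi\to\psi)$ we already have $\vdash_{\phi\to\psi}\mathsf{Tr}(\mathsf{Quasi})$; indeed the inductive argument there was carried out over all systems obtained at any step of Stage 2, and the members of $\textsf{Pure}(\phi\to\psi)$ are simply the systems obtained at the final step. Next, since $\mathsf{Tr}(\textsf{Pure}(\phi\to\psi))$ is defined as the finite conjunction $\bigwedge_{\mathsf{Quasi}\in\textsf{Pure}(\phi\to\psi)}\mathsf{Tr}(\mathsf{Quasi})$, I would close the argument using the fact that $\mathbf{K}_{\mathcal{H}(@)}+(\phi\to\psi)$ contains all classical tautologies (axiom (CT)) and is closed under (MP): from finitely many theorems $\mathsf{Tr}(\mathsf{Quasi}_1),\ldots,\mathsf{Tr}(\mathsf{Quasi}_m)$ one derives their conjunction, so $\vdash_{\phi\to\psi}\mathsf{Tr}(\textsf{Pure}(\phi\to\psi))$ as required.

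I do not expect a genuine obstacle in this corollary, as all the substantive work has been discharged in the preceding lemma and in the success theorem. The only point demanding a little care is finiteness: one must check that $\textsf{Pure}(\phi\to\psi)$ is a finite set, so that the conjunction is a genuine $\mathcal{L}(@)$-formula and the tautological-conjunction step is legitimate. This is ensured because each reduction rule strictly decreases the complexity of the formulas on the critical branches (removing one root connective or one $P_3$-node at a time, as tracked in Lemmas \ref{dLemma:Substage:2:1}--\ref{dLemma:Substage:2:3}), so the reduction tree branches finitely and has finite depth, yielding finitely many terminal systems. Once finiteness is noted, the corollary follows formally, and combined with Proposition \ref{dProp:Translation} and Theorem \ref{dCompleteness:Pure} it will deliver the desired soundness and strong completeness of $\mathbf{K}_{\mathcal{H}(@)}+(\phi\to\psi)$.
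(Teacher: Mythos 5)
Your proposal is correct and matches the paper's intent exactly: the paper states this corollary without a separate proof, treating it as an immediate consequence of the preceding lemma (the members of $\textsf{Pure}(\phi\to\psi)$ being among the systems obtained during Stage~2) together with closure under conjunction via (CT) and (MP). Your additional remark on the finiteness of $\textsf{Pure}(\phi\to\psi)$ is a reasonable point of care that the paper leaves implicit.
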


\subsection{Main Proof}

\begin{theorem}[Similar to Theorem 7.8 in \cite{Zh22b}]
For any extended skeletal formula $\phi\to\psi$, $\mathbf{K}_{\mathcal{H}(@)}+(\phi\to\psi)$ is sound and strongly complete with respect to the class of frames $\mathcal{F}$ defined by $\phi\to\psi$.
\end{theorem}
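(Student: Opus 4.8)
The plan is to reduce the soundness and strong completeness of $\mathbf{K}_{\mathcal{H}(@)}+(\phi\to\psi)$ to that of a \emph{pure} axiomatic extension, for which Theorem \ref{dCompleteness:Pure} already supplies both. Since $\phi\to\psi$ is extended skeletal, Theorem \ref{dThm:Success} guarantees that $\mathsf{ALBA}^{@}_{\mathsf{Restricted}}$ succeeds and returns a set $\textsf{Pure}(\phi\to\psi)$ of pure quasi-inequalities. By the special shape of the inequalities produced by $\mathsf{ALBA}^{@}_{\mathsf{Restricted}}$ (each of the form $\nomi\leq\gamma$ or $\gamma\leq\neg\nomi$), the translation $\mathsf{Tr}$ is defined on every such quasi-inequality, so I set $\Pi:=\mathsf{Tr}(\textsf{Pure}(\phi\to\psi))$, which is a set of pure $\mathcal{L}(@)$-formulas.

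First I would establish that $\phi\to\psi$ and $\Pi$ define the same class of frames $\mathcal{F}$. For any frame $\mathbb{F}$, the equivalence of $\mathbb{F}\Vdash\phi\to\psi$ and $\mathbb{F}\Vdash\textsf{Pure}(\phi\to\psi)$ is exactly the reduction established inside the soundness proof of Theorem \ref{dThm:Soundness}, namely the chain from (\ref{dbCrct:Eqn0}) to (\ref{dbCrct:Eqn3}); while $\mathbb{F}\Vdash\textsf{Pure}(\phi\to\psi)$ iff $\mathbb{F}\Vdash\Pi$ follows from Proposition \ref{dProp:Translation} applied under every valuation $V$ on $\mathbb{F}$. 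Chaining these two equivalences yields $\mathbb{F}\Vdash\phi\to\psi$ iff $\mathbb{F}\Vdash\Pi$, so the class $\mathcal{F}$ defined by $\phi\to\psi$ coincides with the class defined by $\Pi$.

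Next, since $\Pi$ is pure, Theorem \ref{dCompleteness:Pure} yields that $\mathbf{K}_{\mathcal{H}(@)}+\Pi$ is sound and strongly complete with respect to $\mathcal{F}$. It then suffices to show that the two extensions $\mathbf{K}_{\mathcal{H}(@)}+(\phi\to\psi)$ and $\mathbf{K}_{\mathcal{H}(@)}+\Pi$ have the same theorems, i.e.\ that $\vdash_{\phi\to\psi}$ and $\vdash_{\Pi}$ coincide. One inclusion is Corollary \ref{dCor:Main}: each $\pi\in\Pi$ satisfies $\vdash_{\phi\to\psi}\pi$, so every axiom of $\mathbf{K}_{\mathcal{H}(@)}+\Pi$ is provable in $\mathbf{K}_{\mathcal{H}(@)}+(\phi\to\psi)$. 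For the converse, $\phi\to\psi$ is valid on every frame of $\mathcal{F}$ (as $\mathcal{F}$ is defined by it), hence $\emptyset\Vdash_{\mathcal{F}}\phi\to\psi$, and strong completeness of $\mathbf{K}_{\mathcal{H}(@)}+\Pi$ gives $\vdash_{\Pi}\phi\to\psi$. Because over the common base $\mathbf{K}_{\mathcal{H}(@)}$ each extension derives the axioms of the other, the two derivability relations agree, and therefore $\mathbf{K}_{\mathcal{H}(@)}+(\phi\to\psi)$ inherits soundness and strong completeness with respect to $\mathcal{F}$ from $\mathbf{K}_{\mathcal{H}(@)}+\Pi$.

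The conceptual crux of the whole argument has in fact already been discharged in Corollary \ref{dCor:Main}, the step-by-step proof that the $\mathsf{Tr}$-translations of the systems generated by $\mathsf{ALBA}^{@}_{\mathsf{Restricted}}$ remain provable in $\mathbf{K}_{\mathcal{H}(@)}+(\phi\to\psi)$; this is where the derived equivalence $\vdash_{\Sigma}@_{\nomi}\theta(@_{\nomj}\alpha)\leftrightarrow@_{\nomi}\theta(\bot)\lor(@_{\nomi}\theta(\top)\land@_{\nomj}\alpha)$ together with the rules $(\mathsf{BG}_{@})$ and $(\mathsf{Name}_{@})$ do the real work. Given that corollary and Theorem \ref{dCompleteness:Pure}, the only obstacle remaining at this stage is the bookkeeping needed to verify that the two frame classes coincide and that deductive equivalence of the axiom sets transfers soundness and strong completeness; I expect no genuinely new difficulty to arise in this final assembly.
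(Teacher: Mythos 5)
Your proposal is correct and follows essentially the same route as the paper: both reduce to the pure extension $\mathbf{K}_{\mathcal{H}(@)}+\mathsf{Tr}(\mathsf{Pure}(\phi\to\psi))$, identify the frame classes via Theorem~\ref{dThm:Soundness} and Proposition~\ref{dProp:Translation}, invoke Theorem~\ref{dCompleteness:Pure} for the pure logic, and use Corollary~\ref{dCor:Main} to pull theorems back to $\mathbf{K}_{\mathcal{H}(@)}+(\phi\to\psi)$. The only cosmetic difference is that the paper organizes the argument as a cycle of three implications between $\Gamma\vdash_{\phi\to\psi}\gamma$, $\Gamma\Vdash_{\mathcal{F}}\gamma$ and $\Gamma\vdash_{\mathsf{Tr}(\mathsf{Pure}(\phi\to\psi))}\gamma$, whereas you first establish deductive equivalence of the two axiomatizations and then transfer; these are the same argument.
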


\begin{proof}
The proof is the same as \cite[Theorem 7.8]{Zh22b}. We prove that for any set $\Gamma$ of $\mathcal{L}(@)$-formulas and any $\mathcal{L}(@)$-formula $\gamma$, the following three conditions are equivalent:

\begin{enumerate}
\item $\Gamma\vdash_{\phi\to\psi}\gamma$;
\item $\Gamma\Vdash_{\mathcal{F}}\gamma$;
\item $\Gamma\vdash_{\mathsf{Tr}(\mathsf{Pure}(\phi\to\psi))}\gamma.$
\end{enumerate}

\begin{itemize}
\item[(1$\Rightarrow$2):] The soundness proof is easy. 
\item[(2$\Rightarrow$3):] From 
\begin{center}
\begin{tabular}{l l l}
& $\mathbb{F}\Vdash\phi\to\psi$ &\\
iff & $\mathbb{F}\Vdash\mathsf{Pure}(\phi\to\psi)$ & (Theorem \ref{dThm:Soundness})\\
iff & $\mathbb{F}\Vdash\mathsf{Tr}(\mathsf{Pure}(\phi\to\psi))$, & (corollary of Proposition \ref{dProp:Translation})\\
\end{tabular}
\end{center}
$\mathcal{F}$ is also defined by $\mathsf{Tr}(\mathsf{Pure}(\phi\to\psi))$. By Theorem \ref{dCompleteness:Pure}, we have the completeness of $\mathbf{K}_{\mathcal{H}(@)}+\mathsf{Tr}(\mathsf{Pure}(\phi\to\psi))$ with respect to $\mathcal{F}$.
\item[(3$\Rightarrow$1):] By Corollary \ref{dCor:Main}, $\vdash_{\phi\to\psi}\mathsf{Tr}(\mathsf{Pure}(\phi\to\psi))$, therefore all theorems of $\mathbf{K}_{\mathcal{H}(@)}+\mathsf{Tr}(\mathsf{Pure}(\phi\to\psi))$ are also theorems of $\mathbf{K}_{\mathcal{H}(@)}+(\phi\to\psi)$.
\end{itemize}
\end{proof}
The following corollary follows from an easy adaptation of the previous results to a set $\Sigma$ of extended skeletal formulas:

\begin{corollary}
For any set $\Sigma$ of extended skeletal formulas, $\mathbf{K}_{\mathcal{H}(@)}+\Sigma$ is sound and strongly complete with respect to the class of frames $\mathcal{F}$ defined by $\Sigma$.
\end{corollary}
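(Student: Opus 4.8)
The plan is to mimic the three-way equivalence of the preceding theorem, replacing the single formula $\phi\to\psi$ by the whole set $\Sigma$ throughout. First I would set $\mathsf{Pure}(\Sigma):=\bigcup_{\phi\to\psi\in\Sigma}\mathsf{Pure}(\phi\to\psi)$ and observe that, since $\mathcal{F}$ is by definition the class of frames on which \emph{every} member of $\Sigma$ is valid, the soundness of $\mathsf{ALBA}^{@}$ (Theorem \ref{dThm:Soundness}) applied formula-by-formula gives, for each $\phi\to\psi\in\Sigma$, that $\mathbb{F}\Vdash\phi\to\psi$ iff $\mathbb{F}\Vdash\mathsf{Pure}(\phi\to\psi)$ iff $\mathbb{F}\Vdash\mathsf{Tr}(\mathsf{Pure}(\phi\to\psi))$ (the last step by the corollary of Proposition \ref{dProp:Translation}). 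Intersecting over all $\phi\to\psi\in\Sigma$ shows that $\mathcal{F}$ is exactly the class of frames defined by the set $\mathsf{Tr}(\mathsf{Pure}(\Sigma))$.

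Next, since every formula in $\mathsf{Tr}(\mathsf{Pure}(\Sigma))$ is pure, Theorem \ref{dCompleteness:Pure} applies directly to this (possibly infinite) set of pure formulas, yielding that $\mathbf{K}_{\mathcal{H}(@)}+\mathsf{Tr}(\mathsf{Pure}(\Sigma))$ is sound and strongly complete with respect to $\mathcal{F}$. It then remains to transfer this completeness between the two systems $\mathbf{K}_{\mathcal{H}(@)}+\Sigma$ and $\mathbf{K}_{\mathcal{H}(@)}+\mathsf{Tr}(\mathsf{Pure}(\Sigma))$, which I would do exactly as in the single-formula theorem, by proving that for any set $\Gamma$ of $\mathcal{L}(@)$-formulas and any $\mathcal{L}(@)$-formula $\gamma$ the three conditions $\Gamma\vdash_{\Sigma}\gamma$, $\Gamma\Vdash_{\mathcal{F}}\gamma$ and $\Gamma\vdash_{\mathsf{Tr}(\mathsf{Pure}(\Sigma))}\gamma$ are equivalent.

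The soundness direction ($1\Rightarrow 2$) is routine, and ($2\Rightarrow 3$) is immediate from the strong completeness of the pure system just established. The only step needing a small argument beyond the single-formula case is ($3\Rightarrow 1$): I would show that every axiom of $\mathbf{K}_{\mathcal{H}(@)}+\mathsf{Tr}(\mathsf{Pure}(\Sigma))$ is a theorem of $\mathbf{K}_{\mathcal{H}(@)}+\Sigma$. This holds because each $\pi\in\mathsf{Tr}(\mathsf{Pure}(\Sigma))$ lies in $\mathsf{Tr}(\mathsf{Pure}(\phi\to\psi))$ for some $\phi\to\psi\in\Sigma$, so by Corollary \ref{dCor:Main} we have $\vdash_{\phi\to\psi}\pi$; and since $\mathbf{K}_{\mathcal{H}(@)}+\Sigma$ extends $\mathbf{K}_{\mathcal{H}(@)}+(\phi\to\psi)$, we obtain $\vdash_{\Sigma}\pi$. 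Hence every theorem of $\mathbf{K}_{\mathcal{H}(@)}+\mathsf{Tr}(\mathsf{Pure}(\Sigma))$ is a theorem of $\mathbf{K}_{\mathcal{H}(@)}+\Sigma$, giving ($3\Rightarrow 1$).

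The main obstacle, modest as it is, is bookkeeping: one must verify that passing from a single formula to a set does not disturb the correspondence step, i.e.\ that the frame class defined by $\Sigma$ really is the intersection of the individual classes and coincides with the class defined by the union $\mathsf{Tr}(\mathsf{Pure}(\Sigma))$, and that $\mathsf{Tr}(\mathsf{Pure}(\Sigma))$ remains a set of \emph{pure} formulas so that Theorem \ref{dCompleteness:Pure} still applies to the (possibly infinite) set. Both points are immediate once the notation is unwound, which is why the adaptation is indeed routine.
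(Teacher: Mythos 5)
Your proposal is correct and is precisely the ``easy adaptation of the previous results to a set $\Sigma$'' that the paper invokes without spelling out: you form the union of the pure correspondents, note that Theorem \ref{dCompleteness:Pure} already covers arbitrary sets of pure formulas, and transfer provability formula-by-formula via Corollary \ref{dCor:Main}. Nothing in your argument diverges from the intended route, and the bookkeeping points you flag are indeed the only ones that need checking.
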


\paragraph{Acknowledgement} The research of the author is supported by the Taishan Young Scholars Program of the Government of Shandong Province, China (No.tsqn201909151).

\bibliographystyle{abbrv}
\bibliography{McKinsey}

\end{document}